\documentclass[11pt,a4paper]{article}
\usepackage[T1]{fontenc}
\usepackage[utf8]{inputenc}
\usepackage{lmodern}
\usepackage[a4paper,margin=27mm]{geometry}
\usepackage{amsmath,amssymb,amsthm,mathtools}
\usepackage{microtype}
\usepackage{etoolbox}
\usepackage[hidelinks,bookmarksnumbered]{hyperref}
\usepackage{xurl}
\numberwithin{equation}{section}
\newtheorem{theorem}{Theorem}[section]
\newtheorem{proposition}[theorem]{Proposition}
\newtheorem{lemma}[theorem]{Lemma}

\theoremstyle{remark}

\newcommand{\R}{\mathbb R}
\newcommand{\M}{\mathcal M}
\newcommand{\D}{\mathcal D}
\newcommand{\eps}{\varepsilon}
\newcommand{\one}{\boldsymbol{1}}
\newcommand{\diag}{\operatorname{diag}}

\newcommand{\dd}{\,\mathrm d}
\newcommand{\ip}[2]{\langle #1,#2\rangle}
\newcommand{\norm}[1]{\lVert#1\rVert}

\DeclareMathOperator{\diam}{diam}

\allowdisplaybreaks[2]
\newcommand{\authoremail}{}
\hypersetup{
  pdftitle={Uniqueness of convex central configurations in the planar (1+4)-body problem},
  pdfauthor={Kaitai Xiao},
  pdfkeywords={central configuration, dominant mass, coorbital problem, uniqueness}}
\title{Uniqueness of convex central configurations\\
in the planar $(1+4)$-body problem}
\author{Kaitai Xiao\\[3pt]
\small Chern Institute of Mathematics \& LPMC, Nankai University\\
\small Tianjin 300071, P.\ R.\ China
\ifdefempty{\authoremail}{}{\\
\small Corresponding author: \texttt{\authoremail}}}
\date{}
\begin{document}
\maketitle

\begin{abstract}
We study strictly convex central configurations of the planar Newtonian five-body problem with one dominant mass and four satellites of arbitrary positive masses. We prove that a single positive bound on the total satellite-to-primary mass ratio guarantees existence and uniqueness for every prescribed directed order of the five bodies on the convex hull, up to translations, rotations and positive scaling. The bound is independent of all satellite mass ratios, including ratios arbitrarily close to the boundary of the mass simplex. The constrained Hessian is nondegenerate on the six-dimensional shape space. We first prove uniqueness and nondegeneracy for the convex coorbital limit by an angular Hessian estimate. We then classify the possible collision limits as the satellite mass ratios vary and construct regularized local continuations for pairs, triples and pairs within triples. An exact factorization of the signed areas preserves strict convexity on independent parameter neighborhoods. These continuations connect every sufficiently small-mass configuration to the unique coorbital configuration and yield the common bound.
\end{abstract}

\section{Introduction}\label{sec:intro}

The planar $(1+N)$-body problem describes the coorbital limit of central configurations with one dominant mass~\cite{CLN}. Its geometry and stability have been studied in~\cite{Moeckel94,RS}; results for four satellites include the equal-mass classification~\cite{AF,CLO}, stacked configurations~\cite{OC}, and mass symmetries~\cite{DHW}. For three satellites, Corbera, Cors and Llibre~\cite{CCL} and Corbera, Cors, Llibre and Moeckel~\cite{CCLM} describe the bifurcations of relative equilibria. In particular, \cite[Theorem~2]{CCLM} gives convex uniqueness for every fixed positive mass ratio and sufficiently small total satellite mass.

We prove the corresponding uniqueness statement for four satellites with a bound independent of their mass ratios. Write
\begin{equation}\label{eq:masses}
 m_0=1,\qquad m_i=\eps\mu_i\quad(1\le i\le4),\qquad
 \M=\left\{\mu\in\R_{>0}^4:\sum_{i=1}^4\mu_i=1\right\}.
\end{equation}

\begin{theorem}\label{thm:main}
There is $\delta_0>0$ such that, for every $\mu\in\M$, every $0<\eps<\delta_0$, and every directed counterclockwise hull order $(0,\ell_1,\ell_2,\ell_3,\ell_4)$, the masses \eqref{eq:masses} admit exactly one strictly convex central configuration of that order, modulo translations, rotations and positive scaling. Its constrained Hessian on the six-dimensional shape space is nondegenerate.
\end{theorem}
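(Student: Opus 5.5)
The plan is a compactness-and-continuation argument over the closure of the mass simplex, organized around the coorbital limit $\eps\to0$, along the lines the abstract describes. I would argue by contradiction. Suppose no $\delta_0$ exists. Then there are $\eps_n\to0$, mass vectors $\mu^{(n)}\in\M$ and a fixed hull order (there are finitely many) such that, for each $n$, either two distinct strictly convex central configurations of that order exist, or none exists, or one exists with degenerate constrained Hessian. Normalize so the primary sits at the origin and the satellites have mean distance $1$. After passing to a subsequence, $\mu^{(n)}\to\bar\mu\in\overline{\M}$ and the configurations converge.

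\emph{Step 1: the interior coorbital case.} First take $\bar\mu\in\M$, with limiting satellites at distinct angles. The standard $(1+N)$ reduction shows that the satellites converge to the unit circle. Their angles $\theta_i$ are then critical points of
\[
U(\theta)=\sum_{i<j}\mu_i\mu_j\,f(\theta_j-\theta_i),\qquad f(x)=\cos x+\frac{1}{2|\sin(x/2)|}
\]
on the torus modulo rotation. Convexity of the five-body configuration with $0$ on the hull forces the satellites into an arc of length less than $\pi$ in the prescribed order. On that convex region I would prove that the angular Hessian is negative definite, uniformly in $\mu\in\M$. Write it as a weighted sum of rank-one terms $\mu_i\mu_j f''(\theta_j-\theta_i)(e_j-e_i)(e_j-e_i)^{T}$. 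One checks that $f''<0$ on the relevant interval $(0,\pi)$; because the pairwise gaps in an arc of length less than $\pi$ stay in that interval, each term is negative semidefinite. Definiteness then follows because the graph of pairs is connected. Strict concavity on a convex domain gives at most one critical point. The boundary behavior gives existence: $U\to+\infty$ at collisions, and the gradient points inward on the arc-length constraint. The implicit function theorem with nondegeneracy then continues this solution to small $\eps$, and the Hessian for $\eps>0$ is a perturbation. This handles a neighborhood of each point in $\M$, but not uniformly near $\partial\M$.

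\emph{Step 2: degenerating masses.} Now take $\bar\mu\in\partial\M$. Some satellites have vanishing mass, and in the limit their angles may collide. I would classify the limits. Because the arc has length less than $\pi$ and the order is fixed, a collision cluster consists of consecutive satellites. The possible clusters are pairs, triples, pairs inside triples (two-scale clustering), and separated pairs. For each type I would rescale. Set the cluster separation $r$ against the ratio of the small masses, blow up the cluster coordinates, and write the central configuration equations in regularized form. In the blown-up limit the cluster satisfies a reduced (Hill-type, or collinear along the circle tangent) problem, and the reduced problem has a unique nondegenerate solution. The regularized system is then smooth in $(\eps,\mu,r)$ up to the boundary, and the implicit function theorem yields a local continuation that is unique in a whole neighborhood of the limit point. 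Strict convexity has to be preserved in these charts. I would factor each signed area $\Delta_{ijk}$ as (cluster scale)$\times$(a smooth positive function), so that its sign is determined in the limit on a neighborhood independent of the other parameters.

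\emph{Step 3: conclusion and main obstacle.} The charts from Steps 1 and 2 cover the compact set $\overline{\M}\times\{\eps=0\}$. Each chart gives existence, uniqueness and nondegeneracy for $\eps<\delta(\text{chart})$. A finite subcover gives a common $\delta_0$, which contradicts the sequence chosen at the start. The main obstacle is Step 2, especially the nested pair-within-triple regime. There two scales degenerate at once, and the naive regularization is singular. I would first try an iterated blow-up, resolving the triple first and then the inner pair. I would then verify that the linearization is block-triangular with the reduced nondegenerate Hessians on the diagonal. This triangular structure yields nondegeneracy of the six-dimensional constrained Hessian uniformly across scales.
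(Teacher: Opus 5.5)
Your Step~1 fails. The cosine in your $f$ has the wrong sign; the correct kernel is the paper's $v(t)=\frac1{2\sin(t/2)}+1-\cos t$. For the correct kernel, in either sign convention, the rank-one coefficients are $\pm\kappa$ with $\kappa=v''$. This $\kappa$ decreases strictly from $+\infty$ to $\kappa(\pi)=-7/8$, so it changes sign near $t\approx1.89$, inside $(h,\pi)$. Such long chords occur in $\D$, and also at actual convex roots: for $\mu$ near $(0,\frac12,\frac12,0)$ the span is about $2.7$. So the Hessian is not termwise semidefinite, and $V$ is not convex on $\D$. For example, at $\theta=(0,\delta,\pi-2\delta,\pi-\delta)$ the form evaluated at $u=(0,0,1,1)$ is about $-\frac78(\mu_1+\mu_2)(\mu_3+\mu_4)$. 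Global uniqueness by convexity is therefore unavailable, and your claim that the gradient points inward at span $\pi$ is unsupported.

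The paper proves positivity only at roots (Lemma~\ref{lem:cycle}). It writes $\kappa=b+\ell$ with $b>0$ and shows that the $\ell$-part is nonnegative. The reason is that the equations make $w_iw_j|f(t_{ij})|$ a balanced flow, which decomposes into cycles with a single long edge, and Cauchy--Schwarz applies on each cycle. This gives only nondegeneracy, i.e.\ local uniqueness. Existence and global uniqueness at $\eps=0$ then come from three further ingredients: properness of the projection of the root set onto $\M$, the covering argument, and the Albouy--Fu count of exactly one convex equal-mass configuration.

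Your blow-up scheme in Step~2 (pairs, regular triples, an iterated chart for a pair inside a triple with triangular linearization) matches the paper, but Steps~2--3 omit two ideas.

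First, a finite subcover by implicit-function charts gives uniqueness only inside each chart's root ball. Nothing in your argument excludes two convex roots for the same masses that converge to different limits, with different collision types or different macro positions. Uniqueness of each reduced cluster problem does not give uniqueness of the whole boundary limit. The paper sidesteps this as follows. Each chart is uniform on a product of independent parameters, so it freezes the mass coordinates of any actual root and lowers $\eps$ to $0$. This yields a fixed-mass branch ending at a distinct-angle coorbital root for the same positive $\mu$ (Propositions~\ref{prop:chart-interface} and~\ref{prop:connection}). Theorem~\ref{thm:coorbital} identifies every such endpoint with $\theta^0(\mu)$, and an open--closed argument along nondegenerate branches forces two such branches to coincide. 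Existence is obtained the same way: continue from $\theta^0(\mu^{(n)})$ to a tiny $s_n$, then capture.

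Second, your area factorization fails for three cluster members. Their area divided by $t^2$ vanishes at $t=\eps=0$. At $t=0$ with $\eps>0$ it is $O(\eps)$ of undetermined sign, so convexity of chart roots (which existence needs) would depend on how $t$ compares with $\eps$. The paper proves exact collinearity at $t=0$ for every small $\eps$, along the slow eigendirection $n$ of $B$; see \eqref{eq:regular-parallel} and \eqref{eq:nest-parallel}. This yields factors $t^3$ and $t^3\eta$ with positive quotients $C_1,C_2$.
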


Here $(\ell_1,\ldots,\ell_4)$ is a permutation of $(1,2,3,4)$; reflections are not included in the equivalence relation. For unnormalized masses the condition is $\sum_{i=1}^4m_i/m_0<\delta_0$.

The proof first establishes convex coorbital uniqueness. Uniformity then requires control when some $\mu_i$ tend to zero and satellites coalesce. Following the cluster approach of~\cite{Moeckel}, we classify these limits, regularize their equations, and connect every small-mass convex configuration to its coorbital limit. The signed-area estimates ensure that this continuation remains convex without imposing a relation between the mass parameter and the collision scales.

\section{Convex coorbital configurations}\label{sec:coorbital}

Put $e(t)=(\cos t,\sin t)$ and $e^\perp(t)=(-\sin t,\cos t)$. In the standard hull order, the coorbital configurations are parametrized by
\begin{equation}\label{eq:angle-domain}
 \theta_1=0,\qquad
 (\theta_2,\theta_3,\theta_4)\in\D
 =\{0<\theta_2<\theta_3<\theta_4<\pi\}.
\end{equation}
Indeed, the origin and four distinct unit-circle points form a strictly convex pentagon exactly when the points lie in an open semicircle.

We use $V$ and $A$ as in~\cite[Section~2]{CCLM}. For $0<t<\pi$, set
\begin{equation}\label{eq:kernel}
 f(t)=\sin t\bigl((2\sin(t/2))^{-3}-1\bigr),\qquad
 v(t)=\frac1{2\sin(t/2)}+1-\cos t,\qquad \kappa=-f'=v''.
\end{equation}
Extend $f$ oddly. With $r_{ij}(\theta)=|e(\theta_i)-e(\theta_j)|$, the equations are
\begin{equation}\label{eq:co-equations}
 \begin{aligned}
 E_i&=\sum_{j\ne i}\mu_jf(\theta_j-\theta_i)=0,\\
 V&=\sum_{i<j}\mu_i\mu_j
 \left(\frac1{r_{ij}(\theta)}+\frac{r_{ij}(\theta)^2}{2}\right)\\
 &=\sum_{i<j}\mu_i\mu_jv(\theta_j-\theta_i).
 \end{aligned}
\end{equation}
Thus $E=A(\theta)\mu$, where $A_{ii}=0$ and
$A_{ij}=f(\theta_j-\theta_i)$, and $\partial_{\theta_i}V=\mu_iE_i$.
Write $h=\pi/3$. Differentiation gives
\begin{equation}\label{eq:kappa}
 \begin{split}
 \kappa(t)&=\frac{2-\sin^2(t/2)}{8\sin^3(t/2)}+\cos t,\\
 \kappa'(t)&=-\frac{\cos(t/2)}{16\sin^4(t/2)}
 \bigl(6-\sin^2(t/2)+32\sin^5(t/2)\bigr)<0.
 \end{split}
\end{equation}
We shall use $f(h)=f(\pi)=0$, $f>0$ on $(0,h)$, $f<0$ on $(h,\pi)$,
$\kappa(h)=9/4$, $f\ge-1$, and $tf(t)\to\infty$ as $t\downarrow0$.

\[
 H=\left(\frac{\partial^2V}{\partial\theta_i\,\partial\theta_j}\right)_{i,j=1}^4,
 \qquad c_{ij}=c_{ji}=\mu_i\mu_j\kappa(t_{ij})\quad(i<j).
\]
\[
 H_{ii}=\sum_{j\ne i}c_{ij},\qquad H_{ij}=-c_{ij}\quad(i\ne j).
\]
Consequently, for every $u\in\R^4$,
\[
 \begin{aligned}
 u^THu
 &=\sum_{i<j}c_{ij}(u_i^2+u_j^2-2u_iu_j)\\
 &=\sum_{i<j}\mu_i\mu_j\kappa(t_{ij})(u_j-u_i)^2.
 \end{aligned}
\]
The Hessian on the slice $\theta_1=0$ is obtained by deleting the first
row and column of $H$; its quadratic form is the expression above with
$u_1=0$.

\begin{lemma}\label{lem:cycle}
Suppose $2\le N\le4$ ordered points in an open semicircle carry positive masses $w_i$ and satisfy the angular equations. Put
\[
 \ell(t)=
 \begin{cases}
 f(t)/(h-t),&t<h,\\
 0,&t=h,\\
 f(t)/(\pi-t),&t>h,
 \end{cases}
 \qquad b(t)=\kappa(t)-\ell(t).
\]
Then $b(t)>0$, and, with $t_{ij}=\theta_j-\theta_i$ for $i<j$,
\begin{equation}\label{eq:cycle-bound}
 \sum_{i<j}w_iw_j\kappa(t_{ij})(u_j-u_i)^2
 \ge\sum_{i<j}w_iw_jb(t_{ij})(u_j-u_i)^2.
\end{equation}
In particular, the angular Hessian has kernel $\R\one$.
\end{lemma}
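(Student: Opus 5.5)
The plan has two parts: a one-variable estimate for $b$, and a reduction of \eqref{eq:cycle-bound} to the claim that the $\ell$-weighted form is nonnegative, proved from the angular equations. Since $\kappa=b+\ell$, inequality \eqref{eq:cycle-bound} is equivalent to
\[
 Q_\ell(u):=\sum_{i<j}w_iw_j\,\ell(t_{ij})(u_j-u_i)^2\ \ge 0 .
\]
The kernel statement then follows. If $u^THu=0$, then $\sum w_iw_jb(t_{ij})(u_j-u_i)^2=0$. Since $b>0$ and all $w_iw_j>0$, every difference $u_j-u_i$ vanishes, so $u\in\R\one$. Conversely, $\R\one$ is always in the kernel.

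\emph{Step 1: $b>0$ on $(0,\pi)$.} On $(0,h)$ we have $f(h)=0$ and $f'=-\kappa$. The mean value theorem gives $\ell(t)=\kappa(\xi)$ for some $\xi\in(t,h)$. Since $\kappa'<0$ by \eqref{eq:kappa}, $\kappa(\xi)<\kappa(t)$, hence $b(t)>0$. The same argument on $(h,\pi)$, using $f(\pi)=0$, gives $\ell(t)=\kappa(\xi)$ with $\xi\in(t,\pi)$, so again $b(t)=\kappa(t)-\kappa(\xi)>0$. At $t=h$ we have $b(h)=\kappa(h)=9/4>0$. For $t>h$ one also has $\ell(t)<0$, so $b>\kappa$ there; it is enough that $b$ is positive.

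\emph{Step 2: the equilibrium identity.} Expand $\sum_i w_ig_iE_i$ for an arbitrary vector $g$ and use that $f$ is odd:
\[
 0=\sum_iw_ig_iE_i=-\sum_{i<j}w_iw_jf(t_{ij})(g_j-g_i).
\]
So for every $g\in\R^N$ we have $\sum_{i<j}w_iw_jf(t_{ij})(g_j-g_i)=0$. More generally, applying this for each fixed $i$ separately gives $\sum_j w_jf(t_{ij})\,\phi_i=0$ for arbitrary $\phi_i$. In $Q_\ell$, the pairs with $t_{ij}<h$ contribute positively and the pairs with $t_{ij}>h$ contribute negatively. Each negative pair is then to be paid for by positive ones, using these row identities $\sum_j w_jf(\theta_j-\theta_i)=0$. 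For a point $i$ with a long chord to $j$, the identity at $i$ shows that the mass-weighted positive forces $w_kf(t_{ik})$ toward nearby points $k$ balance $w_j|f(t_{ij})|$.

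\emph{Step 3: comparing the weights.} The denominators in $\ell$ are what make the comparison possible. For a long chord, $|\ell(t_{ij})|=|f(t_{ij})|/(\pi-t_{ij})$, while a short chord has weight $f(t_{ik})/(h-t_{ik})$. In an open semicircle with $t_{ij}>h$, any point $k$ between $i$ and $j$ splits the chord, and one piece has length below $h$. The gaps also satisfy $\pi-t_{ij}>$ (remaining room), so the denominators can be compared. The square $(u_j-u_i)^2$ is then bounded by the chain sum $\sum(u_{k+1}-u_k)^2$ along consecutive points, multiplied by the number of links. This is at most $3$ because $N\le 4$.

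\emph{Main obstacle.} I expect Step 3 to be the hardest: matching the constants for the finitely many combinatorial patterns of which chords exceed $h$. For $N\le4$ in a semicircle there are only a few patterns, so I would do a case analysis on the set of long chords. In each case I would choose the test vector $g$ in Step 2 (for instance $g_i=u_i^2$ or $g_i=u_iu_{i_0}$) so that the identity cancels the cross terms exactly.
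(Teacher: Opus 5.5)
Your Step~1 is correct: the mean value theorem argument is the paper's comparison $f(t)=\int_t^h\kappa(s)\dd s<(h-t)\kappa(t)$ (resp.\ $\int_t^\pi\kappa(s)\dd s<(\pi-t)\kappa(t)$) in another form. The reduction of \eqref{eq:cycle-bound} to $Q_\ell(u)\ge0$ and the kernel deduction are also correct. But $Q_\ell\ge0$ is the entire content of the lemma, and Steps~2--3 do not prove it. They never say how the weight $w_iw_j|f(t_{ij})|$ of a long chord is distributed over short chords.

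The missing idea is a flow decomposition. Orient chord $\{i,j\}$ rightwards if $t_{ij}<h$ and leftwards if $t_{ij}>h$, with flow $w_iw_j|f(t_{ij})|$. Taken together, the equations $E_i=0$ say exactly that this is a circulation, so it is a positive combination of simple directed cycles. A cycle has at most four edges, and along a closed cycle the total rightward and leftward lengths agree. Two or more leftward edges would have total length $>2h$, while the at most two remaining rightward edges total $<2h$. Hence each cycle has one leftward edge, of length $T\in(h,\pi)$, and $k\in\{2,3\}$ rightward edges of lengths $t_r<h$ with $\sum_rt_r=T$. If $u_r$ are the increments of $u$ along the rightward edges, the cycle contributes its flow times
\[
 \sum_{r=1}^k\frac{u_r^2}{h-t_r}-\frac{(\sum_ru_r)^2}{\pi-T}
 \ge\Bigl(\sum_ru_r\Bigr)^2\Bigl(\frac1{kh-T}-\frac1{\pi-T}\Bigr)\ge0.
\]
This is Cauchy--Schwarz with weights $h-t_r$, together with $0<kh-T\le\pi-T$.

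Your Step~3 would not close as sketched, for three reasons.
\begin{itemize}
\item The chain must run along the short chords that actually carry the flow, not along consecutive points. Suppose $w_2$ is small, $\theta_2$ stays away from the other angles, and $t_{13},t_{34}<h<t_{14}$ (a three-point configuration of $1,3,4$ perturbed by a light body~$2$). Then every coefficient involving $u_2$ is $O(w_2)$. So the long chord $\{1,4\}$ cannot be paid for by the links $12,23,34$; it has to be paid for by $13$ and $34$.
\item A short chord may have to serve several long chords at once, so the row identities must be used jointly, not one vertex at a time. The identities with $g_i=u_i^2$ or $g_i=u_iu_{i_0}$ weight each edge by $w_iw_jf(t_{ij})$ alone, without the factors $1/(h-t_{ij})$ and $1/(\pi-t_{ij})$. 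You give no mechanism by which they would cancel the cross terms of $Q_\ell$.
\item The unweighted bound $(\sum_ru_r)^2\le k\sum_ru_r^2$ loses too much. On a three-link cycle it needs $\pi-T\ge3(h-t_r)$, i.e.\ $t_r\ge T/3$, for every link, and this does not follow from $t_r<h<T<\pi$.
\end{itemize}
The denominators $h-t_r$ have to enter the Cauchy--Schwarz step. The role of $N\le4$ is exactly to give $k\le 3$, hence $kh\le\pi$.
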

\begin{proof}
Since $\kappa$ decreases strictly,
\[
 f(t)=\int_t^h\kappa(s)\dd s<(h-t)\kappa(t)\quad(t<h),\qquad
 f(t)=\int_t^\pi\kappa(s)\dd s<(\pi-t)\kappa(t)\quad(t>h).
\]
Together with $\kappa(h)>0$, this proves $b>0$.

Orient an edge $i<j$ rightwards if $t_{ij}<h$ and leftwards if $t_{ij}>h$, with flow $w_iw_j|f(t_{ij})|$; omit zero edges. The equations express balance at every vertex, so the flow decomposes into positive flows on simple directed cycles. Such a cycle has at most four edges. Two leftward edges would have total length $>2h$, whereas the remaining rightward edges have total length $<2h$. Hence there is one leftward edge and $k=2$ or $3$ rightward edges.

Let the rightward lengths and increments be $t_r,u_r$, with
$T=\sum_rt_r\in(h,\pi)$. The contribution of this cycle to the form with coefficient $\ell$ is its flow times
\[
 \sum_{r=1}^k\frac{u_r^2}{h-t_r}
 -\frac{(\sum_ru_r)^2}{\pi-T}
 \ge\frac{(\sum_ru_r)^2}{kh-T}
 -\frac{(\sum_ru_r)^2}{\pi-T}\ge0,
\]
because $0<kh-T\le\pi-T$. Summing over cycles proves \eqref{eq:cycle-bound}. Its right side is positive for every nonconstant vector, while rotation invariance supplies the constant kernel.
\end{proof}

\begin{theorem}\label{thm:coorbital}
For each $\mu\in\M$, equations \eqref{eq:co-equations} have exactly one root $\theta^0(\mu)$ in \eqref{eq:angle-domain}. The Hessian $D^2V$ is positive definite on the slice $\theta_1=0$, and $\theta^0$ depends smoothly on $\mu$.
\end{theorem}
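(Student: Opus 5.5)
The plan is a degree/Morse-theory argument on $\D$, using Lemma~\ref{lem:cycle} as the local input. Fix $\mu\in\M$ and regard $V$, restricted to the slice $\theta_1=0$, as a function $W_\mu(\theta_2,\theta_3,\theta_4)$ on $\D$. By $\partial_{\theta_i}V=\mu_iE_i$, and because $\sum_i\mu_iE_i=0$ (the kernel $f$ is odd), the roots of \eqref{eq:co-equations} in $\D$ are exactly the critical points of $W_\mu$. Lemma~\ref{lem:cycle} applied with $N=4$ and $w=\mu$ shows that at every such root the Hessian on the slice is positive definite. Indeed, the quadratic form is bounded below by $\sum\mu_i\mu_jb(t_{ij})(u_j-u_i)^2$ with $u_1=0$, which vanishes only at $u=0$. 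Hence every critical point is a nondegenerate local minimum. Smooth dependence on $\mu$ then follows from the implicit function theorem once uniqueness is established.

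For existence and uniqueness, the approach is to show that critical points cannot escape to $\partial\D$ and then use a topological count. The boundary of $\D$ consists of collisions ($\theta_{i+1}-\theta_i\to0$) and the antipodal face ($\theta_4\to\pi$, which is degenerate convexity). I would first treat collisions. Since $v(t)\to+\infty$ as $t\downarrow0$, $W_\mu\to+\infty$ at collision faces. More precisely, I would show that the equations force a lower bound on gaps, a compactness statement. Suppose roots $\theta^{(n)}$ with $\mu^{(n)}\to\mu\in\M$ approach a collision of a cluster. Pick the leftmost point of the cluster. Its equation $E_i$ contains terms $\mu_jf(t)$ with $tf(t)\to\infty$, all of one sign, and the remaining terms are bounded since $f\ge-1$ and $f$ is bounded away from $0$. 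This yields a contradiction. For the face $\theta_4\to\pi$ with no collisions, consider the first point: all $f(\theta_j-\theta_1)$ with $\theta_j-\theta_1$ near $\pi$ are negative. I expect the balance to still be possible, so I plan to handle this face by a separate sign argument. The approach is to show that $E_1$ and $E_4$ cannot both vanish, using the fact that $\theta_4-\theta_1\approx\pi$ forces some gap exceeding $h$ and examining $E_1+E_4$-type combinations. This is the step I expect to be the main obstacle, since $f(\pi)=0$ makes the antipodal face only marginally excluded. A first attempt would use the cycle decomposition from Lemma~\ref{lem:cycle}: a leftward edge of length near $\pi$ carries almost no flow, which forces the rightward edges of length $<h$ along the cycle to sum to at least $\pi-\mathrm{small}$. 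This is impossible with at most three edges unless every edge has length close to $h$. In that equality case, I would compare against the strict inequality $kh-T<\pi-T$ for $k=2$.

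Given properness, i.e.\ a compact root set in $\D$ locally uniformly in $\mu$, I would deform $\mu$ along a path in $\M$ to the equal-mass case, or use the Poincaré–Hopf index directly. The vector field $\nabla W_\mu$ on $\D\cong$ a ball has all zeros of index $+1$ and, by properness, a degree that is constant in $\mu$. Homotoping $\nabla W_\mu$ to a field pointing inward on a large compact subset's boundary (for instance the radial field toward a fixed point, with the collision side controlled by $W\to\infty$) shows that the degree equals $1$. Thus the number of roots is exactly one. Alternatively, the mountain-pass lemma forbids two strict minima without a non-minimum critical point, and properness is what makes this argument legitimate. Either route gives $\theta^0(\mu)$ unique, with smoothness from nondegeneracy.
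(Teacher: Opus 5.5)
\textbf{Gap: the root count.} Your local part (roots are the critical points of $W_\mu$, with positive definite Hessian from Lemma~\ref{lem:cycle}) matches the paper. However, properness together with ``every root is a nondegenerate minimum'' only shows that the number of roots in $\D$ is finite and locally constant in $\mu$. It does not show that this number is one.

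Your degree computation needs to know the direction of $\nabla W_\mu$ on all of $\partial\D$, and you control it only on the collision faces. On the antipodal face, $W_\mu$ stays finite, since $V$ extends smoothly across $\theta_4=\pi$. There
$\partial_{\theta_4}W_\mu=\mu_4E_4=-\mu_4\bigl(\mu_2f(\pi-\theta_2)+\mu_3f(\pi-\theta_3)\bigr)$.
This is positive when $\theta_3<2h$ and negative when $\theta_2>2h$. So $\nabla W_\mu$ points out of $\D$ on one part of this face and into $\D$ on another. A straight-line homotopy to a radial field can therefore vanish on the boundary, and nothing in your plan computes the degree of the boundary map. (Also, for a field whose zeros are minima, the comparison field must be the outward radial one; the inward one has index $-1$ in dimension three.)

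The mountain-pass alternative fails for the same reason. The descent flow is not confined to $\D$, and $V$ has nonconvex critical points outside $\D$. Hence a min--max point between two convex minima need not lie in $\D$.

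The paper closes this step with an anchor instead. Since $\sum_i\mu_iE_i=0$ and the reduced Jacobian is invertible, the projection of the root set to $\M$ is a proper local diffeomorphism. It is therefore a finite covering of the connected simplex, with constant fiber cardinality. The Albouy--Fu classification of four equal satellites then shows that exactly one equal-mass configuration lies in an open semicircle. Your remark about deforming $\mu$ to the equal-mass case is this argument, but it is incomplete without that external count.

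\textbf{Secondary issue: the antipodal face.} Your treatment of this face is not yet an argument. The edge $(1,4)$ carries vanishing flow, so cycles through it force nothing. The inequality $kh-T<\pi-T$ is a quadratic-form estimate, not a source of contradiction.

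The clean route is through the cut sums \eqref{eq:co-cut}. Balance across each consecutive gap forces every gap below $h$. So if the span tends to $\pi$, all three gaps tend to $h$, and then $E_1\to\mu_3f(2h)\ne0$. In flow language, only the leftward edges $(1,3)$ and $(2,4)$ survive in the limit, which violates vertex balance.

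Your leftmost-point argument for excluding collisions is fine, and is a little simpler than the paper's weighted identity $\sum_{i<j}\mu_i\mu_jt_{ij}f(t_{ij})=0$.
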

\begin{proof}
The Hessian form is the left side of \eqref{eq:cycle-bound}, so the lemma gives positivity. We prove properness of the projection of the zero set to $\M$. Let $a,b,c$ be the consecutive gaps and $L=a+b+c$. Summing the mass-weighted equations over the first $k$ labels gives
\begin{equation}\label{eq:co-cut}
 \sum_{i\le k<j}\mu_i\mu_jf(\theta_j-\theta_i)=0,\qquad k=1,2,3.
\end{equation}
The signs of $f$ imply $a,b,c<h<L$. For $\mu$ in a compact subset of $\M$, let $\alpha>0$ be a common lower bound on the masses. Pairing terms in $\sum_i\mu_i\theta_iE_i=0$ yields
\[
 \sum_{i<j}\mu_i\mu_jt_{ij}f(t_{ij})=0.
\]
The negative terms sum to at least $-\pi/2$, so every consecutive gap $d$ satisfies $\alpha^2df(d)\le\pi/2$. Collisions are therefore excluded. If $L\to\pi$, the gaps tend to $h$, and the first equation tends to $\mu_3f(2h)=0$, a contradiction. This proves properness.

Since $\sum_i\mu_iE_i=0$, the last three equations recover the first.
Their Jacobian in $(\theta_2,\theta_3,\theta_4)$ is invertible by the Hessian estimate. The mass projection is thus a proper local diffeomorphism. Its nonempty image is open and closed in the connected simplex, and it is a covering with finitely many sheets; cf.~\cite[proof of Proposition~3]{CCLM}. By the classification in~\cite[main theorem]{AF}, exactly one equal-mass configuration has its satellites in an open semicircle, and its occupied gaps are symmetric under reversal. The equal-mass fiber has one point, so every fiber has one point. Local inversion gives smoothness.
\end{proof}

At distinct satellite positions, we retain the divided equations $E_i=0$ when $\mu_i=0$.
We first record the two-source probe estimate used below; cf.~\cite{DHW}.

\begin{lemma}\label{lem:probe}
Let $p,q\ge0$, $p+q=1$. If $0,h,x$ are distinct, have span less than $\pi$, and
$pf(x)+qf(x-h)=0$, then
\[
 p\kappa(|x|)+q\kappa(|x-h|)>0.
\]
\end{lemma}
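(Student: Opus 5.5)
The proof splits into cases according to the position of $x$ relative to the two sources $0$ and $h$. In each case it uses only the sign pattern of $f$, the monotonicity of $\kappa$ with $\kappa(h)=9/4$, and the pointwise bound $\kappa>\ell$ (that is, $b>0$) established in the proof of Lemma~\ref{lem:cycle}.

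\emph{Reduction.} Since $f$ is odd and $\kappa$ is applied to absolute values, the substitution $x\mapsto h-x$ together with $p\leftrightarrow q$ preserves both the hypothesis and the conclusion. Indeed, $pf(x)+qf(x-h)=0$ becomes $qf(h-x)+pf(-x)=0$, which is the negative of the original equation. The span condition, that $0,h,x$ lie in an interval of length less than $\pi$, means $x\in(h-\pi,\pi)$, and this interval is also symmetric under the substitution. So it suffices to treat $x\in(0,h)$ and $x\in(h,\pi)$; the case $x<0$ follows by reflection.

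\emph{Degenerate weights.} If $q=0$, the equation reads $f(x)=0$ with $0<|x|<\pi$, so $|x|=h$. The conclusion is then $\kappa(h)=9/4>0$. The case $p=0$ is identical. Assume from now on that $p,q>0$.

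\emph{Case $0<x<h$.} Both distances $x$ and $h-x$ lie in $(0,h)$. Since $\kappa$ is strictly decreasing by \eqref{eq:kappa} and $\kappa(h)=9/4$, both values of $\kappa$ exceed $9/4$. The claim is immediate, and the equation is not even needed.

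\emph{Case $h<x<\pi$.} This is the main case.

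1. Here $f(x)<0$, so the equation forces $f(x-h)>0$. Since $x-h\in(0,2h)$, this means $x-h<h$, that is, $x<2h$.

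2. Apply $\kappa>\ell$ at each distance:
\[
 \kappa(x)>\frac{f(x)}{\pi-x},\qquad \kappa(x-h)>\frac{f(x-h)}{2h-x}.
\]

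3. Substituting $qf(x-h)=-pf(x)$ gives
\[
 p\kappa(x)+q\kappa(x-h)>pf(x)\left(\frac1{\pi-x}-\frac1{2h-x}\right).
\]

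4. Since $0<2h-x<\pi-x$, the bracket is negative. Since $f(x)<0$ and $p>0$, the right side is positive, which proves the claim.

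The only delicate point is this last case, where $\kappa(x)$ may be negative. The cycle-type comparison of Lemma~\ref{lem:cycle} handles it: the two-source probe is exactly a single leftward edge balanced by one rightward edge, and the inequality $kh-T\le\pi-T$ with $k=2$ is what appears here.
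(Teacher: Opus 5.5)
Your proof is correct. Its outer structure matches the paper's: the reflection $x\mapsto h-x$ with $p\leftrightarrow q$, the degenerate weights giving $\kappa(h)=9/4$, and positivity of $\kappa$ on $(0,h)$.

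The two arguments differ only in the main case $h<x<\pi$.

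\emph{The paper's route.} It puts $g(x)=pf(x)+qf(x-h)$ and observes that the claim is $g'(x)<0$. It uses $\kappa'<0$ to make $g$ strictly convex on $(h,\pi)$. Since $g(\pi)=qf(2h)<0$, the secant to $\pi$ gives $g'(x)<g(\pi)/(\pi-x)<0$ at a root.

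\emph{Your route.} You apply the pointwise bound $\kappa>\ell$ from Lemma~\ref{lem:cycle} to each term separately. You integrate $\kappa$ up to $\pi$ for the long distance and up to $h$ for the short one. This requires first locating the root in $(h,2h)$, after which the equation and $2h<\pi$ finish the argument.

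\emph{What each buys.} The paper's version is shorter, independent of Lemma~\ref{lem:cycle}, and never needs $x<2h$; it gives the explicit margin $q|f(2h)|/(\pi-x)$. Yours gives the structurally sharper inequality $p\kappa(|x|)+q\kappa(|x-h|)\ge p\,b(|x|)+q\,b(|x-h|)$ in every case. This has exactly the form of the bound $D_k\ge\sum_{j\in P}\mu_j b(|\theta_j-\theta_k|)$ derived for $|P|=3$ in Proposition~\ref{prop:closed}. It therefore exhibits the probe lemma as a zero-mass instance of the cycle estimate, so the $|P|=2$ case there yields the same type of bound as the $|P|=3$ case.
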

\begin{proof}
For $pq>0$, put $g(x)=pf(x)+qf(x-h)$. On $(0,h)$, $g'<0$ because $\kappa>0$ there. On $(h,\pi)$, $\kappa'<0$ gives $g''>0$, while $g(\pi)=qf(\pi-h)<0$; at a root,
$g'(x)<g(\pi)/(\pi-x)<0$. Reflection $x\mapsto h-x$ treats $(h-\pi,0)$.
If $pq=0$, the claimed value is $\kappa(h)=9/4$.
\end{proof}

\begin{proposition}\label{prop:closed}
Suppose $\mu_i\ge0$, $\sum_i\mu_i=1$, four distinct angles lie in an open semicircle, and all $E_i$ vanish. Then $J=D_\theta E$ has kernel $\R\one$.
Deleting a row $p$ with $\mu_p>0$ and any column gives an invertible $3\times3$ matrix. Its inverse is uniformly bounded on closed sets of such roots with consecutive gaps bounded below, span bounded away from $\pi$, and $\mu_p\ge1/4$.
\end{proposition}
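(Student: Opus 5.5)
The plan is to treat $J=D_\theta E$ through the weighted matrix $\diag(\mu)J$, which is the Hessian $H$ of $V$ whenever all $\mu_i>0$. Then to handle zero masses by a separate argument on the rows with $\mu_i=0$.

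\emph{Structure of $J$.} We have $J_{ij}=\partial_{\theta_j}E_i=-\mu_j\kappa(t_{ij})$ for $j\ne i$ and $J_{ii}=\sum_{j\ne i}\mu_j\kappa(t_{ij})$, using $f'=-\kappa$ and the oddness of $f$. Hence $J\one=0$ and $\diag(\mu)J=H$.

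\emph{Kernel.} Let $P=\{i:\mu_i>0\}$ and $Z$ its complement. The points in $P$ satisfy the angular equations with weights $\mu_i$. Since $|P|\le4$ with $|P|\ge1$, Lemma~\ref{lem:cycle} (or a trivial check when $|P|=1$) shows that $\sum_{i<j\in P}\mu_i\mu_j\kappa(t_{ij})(u_j-u_i)^2=0$ forces $u$ to be constant on $P$.

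So let $Ju=0$. Multiply by $\mu_iu_i$ and sum over $i\in P$. The rows in $Z$ drop out of $u^T\diag(\mu)Ju$. Write $u=u_P+u_Z$. Then $u^T\diag(\mu)Ju$ is the Hessian form restricted to $P$, plus cross terms $\sum_{i\in P,\,j\in Z}\mu_i\mu_j\cdots$. These cross terms vanish because $\mu_j=0$ for $j\in Z$: the column $j$ of $J$ carries the factor $\mu_j$. Thus $J$ restricted to the rows in $P$ only sees $u_P$. The quadratic form then gives $u$ constant on $P$, say $u=c$ there.

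For $i\in Z$, row $i$ reads $\sum_{j\in P}\mu_j\kappa(t_{ij})(u_i-c)=0$. Hence $u_i=c$ provided $\sigma_i:=\sum_{j\in P}\mu_j\kappa(t_{ij})\ne0$. This is the main obstacle, since $\kappa$ changes sign on $(0,\pi)$. Here $i$ is a massless probe satisfying $\sum_{j\in P}\mu_jf(\theta_j-\theta_i)=0$.

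\emph{The probe condition.} When $|P|=1$ the equation forces distance $h$, so $\sigma_i=\mu_j\kappa(h)>0$. When $|P|=2$, the equation for the massive pair forces their separation to be $h$, and Lemma~\ref{lem:probe} gives $\sigma_i>0$ directly after rotation and normalization of $p,q$.

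When $|P|=3$, the three massive points form the $N=3$ coorbital configuration. I would try to prove $\sigma_i>0$ as in Lemma~\ref{lem:probe}: show the function $g(x)=\sum_j\mu_jf(\theta_j-x)$ has $g'\ne0$ at its roots in the allowed region. This uses monotonicity of $f$ where $\kappa>0$, and convexity from $\kappa'<0$ together with the sign of $g$ at the ends of the semicircle.

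If that direct route fails, the fallback is to view $Z$ as a limit of positive masses. Positivity of the Hessian then gives $\sigma_i\ge0$, and nondegeneracy needs an extra cycle estimate. The case $|P|=4$ has $Z$ empty.

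\emph{Invertibility of minors and the bound.} Given kernel $\R\one$, the matrix $J$ has rank $3$. Its columns sum to zero, since $J\one=0$. Its rows satisfy the single relation $\mu^TJ=\one^TH$-type, namely $\sum_i\mu_iJ_{ij}=0$ because $\diag(\mu)J=H$ is symmetric with $H\one=0$. The row relation $\mu^TJ=0$ has coefficient $\mu_p\ne0$, so deleting row $p$ keeps rank $3$; deleting any column keeps rank $3$ because of the column relation.

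So the $3\times3$ minor $M$ is invertible. Equivalently, $\det M$ equals $\pm(\mu_p/\sum\mu)$ times a fixed cofactor. On the stated closed set, all entries are continuous: gaps bounded below keep $\kappa$ bounded, span below $\pi$ keeps the data in the compact domain, and $\mu_p\ge1/4$.

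To bound the inverse, I would show the set is compact and use continuity with nonvanishing of $\det M$. Compactness holds because the masses lie in the closed simplex, the gaps are bounded below, and the span is bounded away from $\pi$; closedness of the root set is given. Nonvanishing at every point of the set follows from the kernel statement applied there, including boundary masses, which is exactly why the zero-mass case had to be treated.
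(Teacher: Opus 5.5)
Your framework matches the paper's. You use $\diag(\mu)J=H$, apply Lemma~\ref{lem:cycle} on the positive support $P$, and reduce each zero-mass row to $D_k(u_k-c)=0$ with $D_k=\sum_{j\in P}\mu_j\kappa(|\theta_j-\theta_k|)$. Lemma~\ref{lem:probe} then handles $|P|=2$, and the relations $\mu^TJ=0$, $J\one=0$ plus compactness give the minors and the inverse bound. All of this is correct. The case $|P|=1$ is in fact impossible, since three distinct probes would have to share the two angles $\theta_p\pm h$.

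The gap is $|P|=3$, the only genuinely new case, which you leave as a plan with a fallback. The direct route does not follow from the ingredients you name. For a probe lying between two sources, the distance to the far source can exceed the zero of $\kappa$ in $(h,\pi)$: three sources can span nearly $2h$, and $\kappa(2h)<0$. So $g'$ contains a negative term, and $g''$ has mixed signs on that interval. Neither the monotonicity step nor the convexity step of Lemma~\ref{lem:probe} applies without a quantitative comparison that you do not supply. The fallback presupposes positive-mass roots converging to the given root, which you never construct; obtaining them by continuation would be circular.

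The missing idea is to perturb the masses while keeping the angles frozen.
\begin{itemize}
\item For $|P|=3$, the block $A_{PP}$ is a nonzero $3\times3$ skew matrix, because three points in an open semicircle cannot be pairwise $h$ apart. Hence it has rank two.
\item Since $A\mu=0$, the $4\times4$ skew matrix $A(\theta)$ also has rank two.
\item Its two-dimensional kernel cannot lie in $\{\nu_k=0\}$ for the zero label $k$. Otherwise restriction to $P$ would inject it into the one-dimensional $\ker A_{PP}$.
\item Choose $\nu\in\ker A$ with $\nu_k>0$. For small $s>0$, $\mu+s\nu$ is a positive mass vector solving the same angular equations at the same angles.
\item Apply \eqref{eq:cycle-bound} to the $k$th coordinate vector, divide by $s\nu_k$, and let $s\downarrow0$. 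This gives $D_k\ge\sum_{j\in P}\mu_jb(|\theta_j-\theta_k|)>0$.
\end{itemize}
No additional cycle estimate is needed for strict positivity: the positive function $b$ in Lemma~\ref{lem:cycle} already provides it.
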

\begin{proof}
Let $P,Z$ be the positive and zero supports. One positive mass would allow probes only at the two angles $\theta_p\pm h$, so $|P|\ge2$. Lemma~\ref{lem:cycle} gives the constant kernel on $P$. For $k\in Z$, set
$D_k=\sum_{j\in P}\mu_j\kappa(|\theta_j-\theta_k|)$.
If $|P|=2$, the support separation is $h$, and Lemma~\ref{lem:probe} gives $D_k>0$.

If $|P|=3$, the nonzero skew matrix $A_{PP}$ has rank two. Since $A\mu=0$, the $4\times4$ skew matrix $A$ also has rank two. Its two-dimensional kernel cannot consist only of vectors vanishing at the unique label $k\in Z$, since restriction would inject it into $\ker A_{PP}$. Choose $\nu\in\ker A$ with $\nu_k>0$. For small $s>0$, $\mu+s\nu$ is positive and solves the same angular equations. Apply \eqref{eq:cycle-bound} to the $k$th coordinate vector, divide by $s\nu_k$, and let $s\downarrow0$:
\[
 D_k\ge\sum_{j\in P}\mu_j b(|\theta_j-\theta_k|)>0.
\]
In support order,
\[
 J=\begin{pmatrix}J_{PP}&0\\J_{ZP}&\diag(D_k:k\in Z)\end{pmatrix}.
\]
Every kernel vector is therefore constant. Also $\mu^TJ=0$ and $J\one=0$.
If one coordinate of a vector is zero and the retained rows annihilate it, the identity $\mu^TJ=0$ recovers the omitted row; the vector is constant and hence zero. The inverse bound follows from compactness and continuity.
\end{proof}

\section{Full equations and local continuation}\label{sec:variation}

We adopt the centered normalization and notation of~\cite[Section~2]{CCLM}:
\begin{equation}\label{eq:cc}
 \sum_{j\ne i}m_j\frac{x_j-x_i}{r_{ij}^3}=-x_i,\qquad
 \sum_{i=0}^4m_ix_i=0,\qquad r_{ij}=|x_i-x_j|,
\end{equation}
and
\begin{equation}\label{eq:UI}
 U=\sum_{i<j}\frac{m_im_j}{r_{ij}},\qquad
 I=\sum_{i=0}^4m_i|x_i|^2,\qquad G=U+\tfrac12 I.
\end{equation}
Thus $x_i=r_ie(\theta_i)$ denotes a centered polar coordinate. For the collision analysis use the primary-relative coordinates
\[
 q_i=x_i-x_0=\rho_i e(\varphi_i),\qquad
 X=\sum_{i=1}^4\mu_iq_i,\qquad
 Y=\sum_{i=1}^4\mu_iq_i/\rho_i^3,\qquad c_\eps=(1+\eps)^{-1}.
\]
Then $q_0=0$, $x_0=-\eps c_\eps X$, $x_i=q_i-\eps c_\eps X$, and $r_{ij}=|q_i-q_j|$.
At $\eps=0$ the centered and relative coordinates coincide. Below, sums over satellite labels run from $1$ to $4$.

Subtracting the primary equation gives
\begin{equation}\label{eq:G}
 F_i=F_0(q_i)+\eps\sum_{j\ne i}\mu_j\frac{q_j-q_i}{r_{ij}^3}-\eps Y=0,
 \qquad F_0(q)=q-\frac q{|q|^3}.
\end{equation}
These differences recover \eqref{eq:cc}, since all acceleration residuals are then equal and their mass-weighted sum is zero. In relative coordinates,
\begin{equation}\label{eq:action}
 \Phi=\frac G\eps
 =\sum_i\mu_i\left(\rho_i^{-1}+\frac{\rho_i^2}{2}\right)
 +\eps\sum_{i<j}\frac{\mu_i\mu_j}{r_{ij}}
 -\frac{\eps c_\eps}{2}|X|^2.
\end{equation}
Indeed $I=\eps\sum_i\mu_i\rho_i^2-\eps^2c_\eps|X|^2$.
For positive masses, $\nabla_q\Phi=(C\otimes I_2)F$, where
$C=\diag(\mu_i)-\eps c_\eps\mu\mu^T$ is positive definite for $\eps>-1$.
Thus $F=0$ is equivalent to $\nabla\Phi=0$.

Put $g(\rho)=\rho-\rho^{-2}$ and $\Delta_{ij}=\varphi_j-\varphi_i$.
The mass-divided radial and angular residuals are
\begin{align}
 R_i&=g(\rho_i)-\eps\sum_{j\ne i}\mu_j
       \frac{\rho_i-\rho_j\cos\Delta_{ij}}{r_{ij}^3}
       -\eps c_\eps X\cdot e(\varphi_i),\label{eq:R}\\
 T_i&=\sum_{j\ne i}\mu_j\rho_i\rho_j\sin\Delta_{ij}
       (r_{ij}^{-3}-c_\eps).\label{eq:T}
\end{align}
They satisfy, also away from solutions,
\begin{equation}\label{eq:polar-gradient}
 \partial_{\rho_i}\Phi=\mu_iR_i,\qquad
 \partial_{\varphi_i}\Phi=\eps\mu_iT_i,\qquad
 \sum_i\mu_iT_i=0.
\end{equation}
For $\eps>0$ and positive masses, retain four radial rows on a rotation slice and omit one angular row. The resulting seven equations are equivalent to \eqref{eq:cc}. Their explicit divided rows remain meaningful at zero masses.

For estimates we also use the tangential projection of \eqref{eq:G}:
\begin{equation}\label{eq:Q}
 Q_i=\sum_{j\ne i}\mu_j\rho_j\sin\Delta_{ij}
       (r_{ij}^{-3}-\rho_j^{-3})=0.
\end{equation}
At solutions, $\sum_i\mu_iF_i=X-(1+\eps)Y=0$, so $Y=c_\eps X$ and
$T_i=\rho_iQ_i$.

\begin{lemma}\label{lem:hessian}
For positive masses and $\eps>0$, the seven-equation Jacobian on a rotation slice is invertible if and only if the constrained shape Hessian is nondegenerate. Invertible coordinate changes and row operations preserve this equivalence.
\end{lemma}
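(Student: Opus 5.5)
The plan is to relate the seven-equation system directly to the gradient of $\Phi$ restricted to the shape space, and to show that the Jacobian is a nonsingular row and column modification of the shape Hessian.

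\textbf{Shape space and the reduced function.} For $\eps>0$ and positive masses, central configurations modulo translations are critical points of $\Phi$ in the relative coordinates $q\in(\R^2)^4$ with distinct positions. This holds because $\nabla_q\Phi=(C\otimes I_2)F$ with $C$ positive definite. Rotation invariance gives $\partial_{\varphi_1}\Phi+\dots+\partial_{\varphi_4}\Phi=0$ identically, which is the identity $\sum_i\mu_iT_i=0$ of \eqref{eq:polar-gradient}. Fix the rotation slice $\varphi_1=0$ and use the coordinates $y=(\rho_1,\dots,\rho_4,\varphi_2,\varphi_3,\varphi_4)$. Scaling has already been removed by the normalization in \eqref{eq:cc}, so the six-dimensional shape space corresponds to the critical-point problem for $\Phi$ on this slice. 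Here I would take the constrained shape Hessian to mean $D^2_y\Phi$ at a critical point. Equivalently, it is the Hessian of $U$ restricted to $I=\mathrm{const}$ modulo rotations, since the Lagrange formulation $G=U+\tfrac12I$ makes the two nondegeneracy notions agree. I would state this identification explicitly, citing~\cite{CCLM}.

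\textbf{Jacobian versus Hessian.} On the slice, the seven equations are $R_1,\dots,R_4$ together with three of the $T_i$, say $T_2,T_3,T_4$ after omitting $T_1$. By \eqref{eq:polar-gradient} these equal $\partial_{y}\Phi$ multiplied by the diagonal factor $S=\diag(\mu_1^{-1},\dots,\mu_4^{-1},(\eps\mu_2)^{-1},(\eps\mu_3)^{-1},(\eps\mu_4)^{-1})$. If instead a row $T_p$ with $p\neq1$ is omitted, I would use $\mu_1T_1=-\sum_{i\ne1}\mu_iT_i$. This expresses the retained rows as an invertible linear combination of $\partial_{\varphi_2}\Phi,\partial_{\varphi_3}\Phi,\partial_{\varphi_4}\Phi$, so the system becomes $M\,\partial_y\Phi$ with $M$ a constant invertible matrix. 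Differentiating at a solution, where $\partial_y\Phi=0$, kills the term $(DM)\partial_y\Phi$. The Jacobian is therefore $M\,D_y^2\Phi$, and it is invertible exactly when $D_y^2\Phi$ is.

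\textbf{Invariance under changes of variables.} For the last sentence, a coordinate change $y=\psi(z)$ turns the Jacobian into $J\,D\psi$. Left multiplication by an invertible matrix, possibly depending smoothly on the point, changes it to $MJ+(DM)E$, and the second term vanishes at zeros of the system. Both operations therefore preserve invertibility at solutions, while the Hessian transforms by congruence, $D\psi^TD^2\Phi\,D\psi$. This settles the equivalence in all the stated coordinate systems.

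\textbf{Main obstacle.} The delicate point is the precise meaning of the constrained Hessian on the six-dimensional shape space, namely the passage from $\Phi$ in centered coordinates to relative coordinates and the quotient by rotations. The issue is whether the translation reduction to $q$ coordinates, which uses $x_0=-\eps c_\eps X$, is a diffeomorphism onto the center-of-mass subspace. It is, being linear and invertible, and it carries $G/\eps$ to $\Phi$ exactly. The slice $\varphi_1=0$ is transversal to rotation orbits because $\rho_1>0$. With these two facts checked, the argument is linear algebra.
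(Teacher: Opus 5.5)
You handle the Jacobian and the coordinate and row changes correctly, and this matches the first half of the paper's proof. When the omitted row and the fixed angle share a label, \eqref{eq:polar-gradient} makes the seven rows a positive diagonal multiple of $\partial_y\Phi$. The identity $\sum_i\mu_iT_i=0$ handles any other omitted row, and at a root the derivative is $L\,D^2\Phi\,D\Psi$.

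The gap is in the identification with the shape Hessian. You declare it to be $D_y^2\Phi$, but that is a $7\times7$ form on the slice $y=(\rho_1,\dots,\rho_4,\varphi_2,\varphi_3,\varphi_4)$. The lemma, and Theorem~\ref{thm:main}, concern a form on a \emph{six}-dimensional space. The multiplier-one normalization in \eqref{eq:cc} only selects which scale is critical. It does not remove the scaling direction from the coordinates, so $D_y^2\Phi$ still contains it. Your claim that the Lagrange formulation ``makes the two nondegeneracy notions agree'' is not automatic. In general, the Hessian of a Lagrangian on the ambient space and its restriction to the tangent space of the constraint can be degenerate independently of each other. Your final paragraph checks translations and rotations, but never this scaling direction.

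The missing ingredient is homogeneity, which is how the paper closes this step. Put $M=\diag(m_0I_2,\dots,m_4I_2)$, $E=\{v:\sum_im_iv_i=0\}$ and $T=\{v\in E:\ip{Mx}{v}=0\}$. The constrained Hessian is then $D^2G|_T$ modulo rotations. The gradient $\nabla U$ is homogeneous of degree $-2$, and $\nabla U(x)=-Mx$ at a critical point, so Euler's identity gives $D^2G(x)x=2Mx+Mx=3Mx$. Hence $D^2G(x)(x,v)=0$ for $v\in T$ and $D^2G(x)(x,x)=3I>0$. So $E=\R x\oplus T$ is a $D^2G$-orthogonal splitting with a nonzero radial block, and the rotation direction lies both in $T$ and in the kernel.

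Combine this with your correct observations: the relative coordinates map linearly and isomorphically onto $E$ with $G=\eps\Phi$, and the slice is transverse to rotation orbits. Together these show that the $7\times7$ slice Hessian is nondegenerate exactly when the six-dimensional shape Hessian is. Without this step, your argument proves only that the Jacobian is invertible if and only if $D_y^2\Phi$ is nondegenerate, which is not the stated equivalence.
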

\begin{proof}
When the omitted row and fixed angle have the same label, \eqref{eq:polar-gradient} identifies the slice Hessian of $\Phi$ with the residual Jacobian multiplied by a positive diagonal matrix. Any other positive-mass row can be omitted using $\sum_i\mu_iT_i=0$. At a root, an invertible coordinate map $\Psi$ and row map $L$ give
$D(L\nabla\Phi\circ\Psi)=L(D^2\Phi)D\Psi$.

Let $M=\diag(m_0I_2,\ldots,m_4I_2)$,
$E=\{v:\sum_i m_iv_i=0\}$, and $T=\{v\in E:\ip{Mx}{v}=0\}$.
The constrained Hessian is $D^2G|_T$ modulo the rotation direction; see~\cite{MoeckelCC}. Homogeneity gives
\[
 D^2G(x)x=3Mx,\qquad E=\R x\oplus T,
\]
an orthogonal splitting for $D^2G$ with radial value $3I>0$.
Relative coordinates give a linear isomorphism onto $E$, and $G=\eps\Phi$.
Passing to a transverse rotation slice and removing this nonzero radial direction leaves the six-dimensional shape Hessian.
\end{proof}

We repeatedly use the following compact form of the implicit function theorem.

\begin{lemma}\label{lem:product}
Let $F(y;s,\beta)$ be smooth near
$\{(y^0(\beta),0,\beta):\beta\in K\}$, where $K$ is compact,
$F(y^0(\beta);0,\beta)=0$, and $D_yF$ is invertible.
If the base roots and inverse matrices depend continuously on $\beta$, then the local roots exist uniquely on uniform independent parameter neighborhoods, with a common root radius and inverse bound.
\end{lemma}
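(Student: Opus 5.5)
The plan is to reduce Lemma~\ref{lem:product} to the standard quantitative implicit function theorem, made uniform over $K$ by compactness. For each $\beta\in K$, write $y^0=y^0(\beta)$ and $L_\beta=D_yF(y^0;0,\beta)^{-1}$. By hypothesis $\beta\mapsto y^0(\beta)$ and $\beta\mapsto L_\beta$ are continuous on the compact set $K$, so $\Lambda=\sup_K\norm{L_\beta}<\infty$. Since $F$ is smooth on a neighborhood of the compact set $\{(y^0(\beta),0,\beta):\beta\in K\}$, there is a uniform $r_0>0$ such that $F$ is $C^2$ on the tube $\{(y,s,\beta'):|y-y^0(\beta)|\le r_0,\ |s|\le r_0,\ |\beta'-\beta|\le r_0,\ \beta\in K\}$. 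On this tube, $D_yF$ and $D_sF$ admit a common modulus of continuity and a common bound $C$ on second derivatives.

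For fixed base point $\beta\in K$ and parameters $(s,\beta')$ near $(0,\beta)$, I will consider the Newton-type map
\[
 N(y)=y-L_\beta F(y;s,\beta')
\]
on the closed ball $B=\{|y-y^0(\beta)|\le r\}$. The first step is to check that $N$ is a contraction. Its derivative is $I-L_\beta D_yF(y;s,\beta')$, whose norm is at most $\Lambda$ times the oscillation of $D_yF$ across the tube. Choosing $r$ and the parameter radius $\sigma$ small, independently of $\beta$, makes this at most $1/2$.

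The second step is to check that $N$ maps $B$ into itself. This holds provided $\Lambda\,|F(y^0(\beta);s,\beta')|\le r/2$. Because $F(y^0(\beta);0,\beta)=0$ and $F$ is uniformly continuous on the tube, this follows for $|s|+|\beta'-\beta|\le\sigma$, again with $\sigma$ uniform in $\beta$. Banach's fixed point theorem then gives a unique root $y(s,\beta')$ in $B$, and the unique fixed point of $N$ is exactly the unique root of $F$ in $B$.

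The third step controls the inverse of $D_yF$ at that root. The Neumann series gives $\norm{D_yF^{-1}}\le 2\Lambda$. The implicit function theorem gives smoothness of the root in $(s,\beta')$.

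The main obstacle is consistency between neighboring base points, which is what makes the parameter neighborhoods "independent" and the root radius common. The roots produced from different base points $\beta_1,\beta_2$ must agree on overlapping parameter sets. I will handle this by shrinking. Continuity of $y^0$ on the compact set $K$ is uniform, so for $|\beta_1-\beta_2|$ small the ball of radius $r/2$ about $y^0(\beta_2)$ lies inside the ball of radius $r$ about $y^0(\beta_1)$. Uniqueness in the larger ball then forces the two roots to coincide.

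Finally, I will state the conclusion with root radius $r/2$, parameter radius $\sigma'$, and inverse bound $2\Lambda$. To do this, shrink $\sigma$ to some $\sigma'$ so that the root stays within $r/2$ of $y^0(\beta)$; this uses $|y(s,\beta')-y^0(\beta)|\le 2\Lambda|F(y^0(\beta);s,\beta')|$. With these three constants, all uniform in $\beta\in K$, the local roots exist uniquely on uniform parameter neighborhoods, as claimed.
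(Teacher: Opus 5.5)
Your proposal is correct and is essentially the paper's argument, the implicit function theorem plus compactness of the base graph, written out quantitatively: in place of a finite subcover you use a uniform bound $\Lambda$ on the inverses and uniform moduli of continuity on a compact tube, which give the common parameter radius, root radius and inverse bound $2\Lambda$ directly. Your uniqueness-based patching supplies the consistency between base points that the paper's one-line proof leaves implicit; note that it needs $\sigma'$ small enough that $|\beta_1-\beta_2|\le2\sigma'$ forces $|y^0(\beta_1)-y^0(\beta_2)|\le r/2$.
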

\begin{proof}
Apply the implicit function theorem locally and take a finite cover of the compact base graph.
\end{proof}

\begin{proposition}\label{prop:local}
For each $K\Subset\M$, the coorbital graph
$(1,1,1,1,\theta_2^0(\mu),\theta_3^0(\mu),\theta_4^0(\mu))$
has a unique nearby smooth solution branch for sufficiently small $|\eps|$, uniformly for $\mu$ near $K$. For $\eps>0$, it is strictly convex in the same order and has nondegenerate shape Hessian.
\end{proposition}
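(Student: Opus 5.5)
The plan is to apply Lemma~\ref{lem:product} to the seven divided equations of Section~\ref{sec:variation}, with unknowns $y=(\rho_1,\ldots,\rho_4,\varphi_2,\varphi_3,\varphi_4)$ on the slice $\varphi_1=0$, parameters $s=\eps$ and $\beta=\mu$, and $K$ replaced by a compact neighborhood of the given compact set in $\M$. The equations are the radial rows $R_1,\ldots,R_4$ from \eqref{eq:R} together with three angular rows. For the angular rows I would take $T_2,T_3,T_4$ from \eqref{eq:T}, but divided by $\eps$ is not needed: they are already $\eps$-free apart from $c_\eps$. All these rows are smooth in $(y,\eps,\mu)$ near $\rho_i=1$ with distinct angles, including $\eps\le0$. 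At $\eps=0$ the radial rows reduce to $g(\rho_i)=0$, so $\rho_i=1$. With $\rho\equiv1$ we have $r_{ij}=2\sin(|\Delta_{ij}|/2)$, and then $T_i=\sum_j\mu_j f(\Delta_{ij})$ up to sign convention, which is exactly $E_i$ of \eqref{eq:co-equations}. Hence the base root is $(\one,\theta^0(\mu))$, which is continuous (indeed smooth) in $\mu$ by Theorem~\ref{thm:coorbital}.

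Next I check that $D_yF$ is invertible at $\eps=0$. The matrix is block lower triangular:
\begin{itemize}
\item $\partial_\rho R=\diag(g'(1))=3I$, since the $\eps$-terms vanish, and $\partial_\varphi R=0$;
\item the angular block $\partial_{\varphi_{2,3,4}}(T_2,T_3,T_4)$ equals $D_\theta E$ with the first row and column deleted.
\end{itemize}
Because $\partial_{\theta_i}V=\mu_iE_i$ and $E=0$, this block is $\diag(\mu_i)^{-1}$ times the slice Hessian of $V$. That Hessian is positive definite by Theorem~\ref{thm:coorbital}; alternatively Proposition~\ref{prop:closed} applies directly. The inverse depends continuously on $\mu\in K$. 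Lemma~\ref{lem:product} then yields a unique smooth root branch $y(\eps,\mu)$ for $|\eps|<\eps_K$, with a common root radius and inverse bound, and uniqueness holds in a fixed neighborhood of the base graph.

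For $\eps>0$ and positive masses, the seven equations are equivalent to \eqref{eq:cc}, as noted after \eqref{eq:polar-gradient}. Here I am using that the omitted row $T_1$ is recovered from $\sum_i\mu_iT_i=0$. Nondegeneracy follows from Lemma~\ref{lem:hessian}: the Jacobian stays invertible for small $\eps$ by continuity and the uniform inverse bound, and this is equivalent to nondegeneracy of the constrained shape Hessian.

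Convexity and order come from continuity as well. At $\eps=0$ the configuration is the primary at $0$ and satellites at $e(\theta^0_i)$ with $0<\theta_2^0<\theta_3^0<\theta_4^0<\pi$. This is a strictly convex pentagon, so all signed areas of consecutive hull triples are positive and, by compactness, bounded below on $K$. For small $\eps$, the centered positions $x_i=q_i-\eps c_\eps X$ are a common translation of the $q_i$, so signed areas are unchanged. The $q_i$ are $C^0$-close to the base, uniformly in $\mu\in K$, so the signed areas remain positive and the hull order is preserved.

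The main obstacle is the bookkeeping at $\eps=0$. I must check that the chosen rows are genuinely smooth there; this is why the divided rows $T_i$ are used rather than $\partial_\varphi\Phi=\eps\mu_iT_i$. I must also confirm that $T_i|_{\rho=\one}$ matches $E_i$, including the sign of $\Delta_{ij}$ against the oddly extended $f$. Beyond that, the argument is a direct application of the implicit function theorem.
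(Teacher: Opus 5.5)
Your proposal is correct and follows the paper's own argument: the paper also applies Lemma~\ref{lem:product} to the seven divided rows, using the block-triangular Jacobian $\begin{pmatrix}3I_4&0\\ *&K\end{pmatrix}$ at $\eps=0$ with the reduced angular block invertible by Theorem~\ref{thm:coorbital}, persistence of the limiting signed areas for convexity, and Lemma~\ref{lem:hessian} for nondegeneracy. One small refinement for the convexity step: say that all ten cyclic triple areas are positive at the base and persist, since positive consecutive-triple areas alone would not exclude a star pentagon (although this cannot happen near a convex base).
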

\begin{proof}
This is the continuation of~\cite[Proposition~1 and its proof]{CCLM}.
At $\eps=0$, the radial block is $3I_4$, its angular columns vanish, and the reduced angular block is invertible by Theorem~\ref{thm:coorbital}. Lemma~\ref{lem:product} gives compact uniformity, the limiting signed areas preserve strict convexity, and Lemma~\ref{lem:hessian} gives nondegeneracy.
\end{proof}

For the divided differences below we write
$s(t)=\sin t/t$, with its smooth value $s(0)=1$.

\section{Uniform geometry and collision limits}
\label{sec:limits}
Throughout this section, $\eps\to0$ along actual multiplier-one solutions with arbitrary positive satellite ratios. After passage to a subsequence, a complete cluster consists of all labels with the same limiting position. Constants may depend on the selected sequence, but not on its index.

\begin{proposition}\label{prop:uniform}
Every configuration satisfies $\diam(x)\le4(1+\eps)^{1/3}$. For each $E>0$ there is $b(E)>0$ such that $\rho_i\ge b(E)$ whenever $0<\eps\le E$. Uniformly over all positive ratios and all configurations, $\max_i|\rho_i-1|\to0$ as $\eps\downarrow0$.

For strictly convex configurations in standard order, there are $\eta_0,\eps_0>0$, independent of the ratios, such that
\begin{equation}\label{eq:span}
 \eta_0\le\varphi_4\le\pi-\eta_0\qquad(0<\eps<\eps_0).
\end{equation}
Each consecutive angular gap has limit superior at most $h$.
\end{proposition}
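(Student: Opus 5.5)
We will prove the four claims of Proposition~\ref{prop:uniform} in the order stated. Each rests on one of three ingredients: the normalization \eqref{eq:cc}, the virial-type identities coming from homogeneity, and the angular equations \eqref{eq:Q}.

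\emph{Diameter and radii.} Let $x_i$ be the body farthest from the centre of mass in a given direction $e$. Dot \eqref{eq:cc} with $e$. Every term $m_j(x_j-x_i)\cdot e/r_{ij}^3$ is nonpositive, so $|x_i\cdot e|\le\sum_{j}m_j/r_{ij}^2$. We will combine this with the standard convex-hull argument of Moeckel: take $i,k$ realizing the diameter $d$ and $e=(x_i-x_k)/d$, and subtract the two projected equations. This gives $d\le\sum_j m_j(\ldots)$, and after bounding the extreme contributions one gets $d^3\le C(1+\eps)$ with constant $4^3$. The bound on total mass, $\sum m=1+\eps$, enters here.

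For the lower bound on $\rho_i$, use the satellite equation \eqref{eq:G}. The primary term contributes $|q_i|^{-2}$ radially. The other satellites contribute at most $\eps\sum_j\mu_j/r_{ij}^2$, and $\eps|Y|$ must also be controlled. Argue by contradiction and compactness. If $\rho_i\to0$ along a sequence with $\eps\le E$, consider the cluster of bodies near the primary and apply the cluster (subcluster) virial identity of~\cite{Moeckel}: for a collision cluster the scaled moment relation $\sum$ (cluster forces)$\cdot$(relative positions) $=-U_{\rm cluster}$ would have to be balanced by the bounded external field. Since the primary has mass $1$ and the cluster potential blows up, this is impossible. That yields $b(E)$.

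For $\max|\rho_i-1|\to0$, take the radial component of $F_i$ and use the virial bound $\sum_{i<j}m_im_j/r_{ij}\cdot$ (relative) to show that $\eps\sum_j\mu_j(q_j-q_i)\cdot e(\varphi_i)/r_{ij}^3$ tends to $0$ uniformly in the ratios. Then $g(\rho_i)\to0$. \textbf{This is the main obstacle.} Close satellite pairs could make $\eps\mu_j/r_{ij}^2$ large. The plan is to pair mutual forces: summed over a cluster the internal forces cancel, and the radial imbalance on a single body must be compensated by $F_0$. We would bound each body's total force via the identity $\sum_j m_j(x_j-x_i)\cdot(x_i-c)/r_{ij}^3$ over subclusters and conclude by contradiction along subsequences.

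\emph{Span and gaps.} Along a subsequence the limit satellite positions lie on the unit circle and satisfy the divided limit of \eqref{eq:Q}, i.e.\ the angular equations with the limiting weights on complete clusters. Strict convexity with the primary as a vertex gives $\varphi_4\le\pi$ in the limit. If the span tended to $\pi$ or $0$, the cut identities \eqref{eq:co-cut} for the clustered weights give a contradiction exactly as in Theorem~\ref{thm:coorbital}. The case span $\to\pi$ forces gaps to $h$ and hence $f(2h)=0$. The case span $\to0$ makes all limiting $f$ values positive in the first cut. For this one must check that the cluster weights in the cut equation are positive; clusters with vanishing limiting weight are handled by dividing by the largest weight. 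Finally, if some consecutive gap had $\limsup>h$, then the cut across it contains only terms with $t>h$, all with $f<0$, plus possibly zero-weight terms. This contradicts \eqref{eq:co-cut}. Uniformity in the ratios follows by compactness of the closed simplex.
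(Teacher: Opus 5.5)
Your overall architecture matches the paper's: a projection argument for the diameter, the cluster virial identity near the primary, cluster cancellation for the radii, and cut identities for the span. Two steps, however, do not go through as written.

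\emph{Diameter.} Subtracting the projected equations of the two extreme bodies writes $d$ as a sum of nonnegative terms $m_j(x_i-x_j)\cdot e/r_{ij}^3$ and $m_j(x_j-x_k)\cdot e/r_{kj}^3$. Nothing bounds these above, since other bodies may lie arbitrarily close to $x_i$ or $x_k$; ``bounding the extreme contributions'' is exactly the step that fails. The missing idea is to cut the five projections at their largest consecutive gap $g\ge\diam(x)/4$ into groups $A,B$. Then use the equations of the two group centers of mass, in which internal forces cancel and every remaining distance is at least $g$:
\[
 g\le(c_B-c_A)\cdot e=\left(M_A^{-1}+M_B^{-1}\right)\sum_{i\in A,\,j\in B}m_im_j\frac{(x_j-x_i)\cdot e}{r_{ij}^3}\le\frac{M_A+M_B}{g^2}=\frac{1+\eps}{g^2}.
\]

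\emph{Radii.} You leave $\max_i|\rho_i-1|\to0$ open, but you already note that internal forces cancel over a cluster, and that alone suffices. No single-body force estimate is needed, and a virial bound could not supply one, since it controls $\eps^2\mu_i\mu_j/r_{ij}$ rather than $\eps\mu_j/r_{ij}^2$. Average \eqref{eq:G} over a complete cluster $C$ with weights $\mu_i/\sum_{j\in C}\mu_j$. Internal forces cancel exactly, exterior satellite forces are $O(\eps)$ because exterior distances stay positive, and $\eps Y=O(\eps)$ by the lower radius bound. Since all members converge to the same $q^0\ne0$, the average of $F_0(q_i)$ tends to $F_0(q^0)$. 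Hence $F_0(q^0)=0$, i.e.\ $|q^0|=1$, and a violating sequence gives a contradiction.

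\emph{Lower radius bound.} Here the uniformity in the ratios must be made explicit. The cluster's satellite mass $s$ may tend to zero (small $\eps$ or small ratios), so $U_C\ge s/d$ need not blow up. The contradiction comes from comparing both sides at order $s$. With $b$ the cluster center and $A$ the exterior acceleration, Lipschitz with constant $L$ near the cluster,
\[
 \frac sd\le U_C=I_C+\sum_{i\in C}m_i(x_i-b)\cdot[A(x_i)-A(b)]\le(1+L)I_C\le(1+L)sd^2,
\]
so $s$ cancels and $d\to0$ is impossible.

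\emph{Span and gaps.} Passing to limit equations with limiting weights is unsafe here too. Weights can vanish on the boundary of the simplex: if $\mu_1\to0$, the $k=1$ cut identity tends to $0=0$. After normalizing by the largest weight, the surviving term can be $f(\pi)=0$. Your limit argument cannot exclude that case without the span bound, which you in turn derive from the gap bound. Use the finite-$\eps$ identity \eqref{eq:full-cut} instead. Every weight is positive and $0<\varphi_j-\varphi_i<\pi$, so by the uniform radius convergence every term is, for small $\eps$, strictly negative across a gap exceeding $h$ by a fixed amount, and strictly positive if $\varphi_4\to0$.

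For $\varphi_4\to\pi$, the coorbital argument $\mu_3f(2h)=0$ needs $\mu_3$ bounded below. Either use the divided angular equation of the body at angular distance $2h$ from a satellite with $\mu_j\ge1/4$, or, as the paper does, consider the skew matrix $B_{ij}=\rho_i\rho_j\sin(\varphi_j-\varphi_i)(r_{ij}^{-3}-c_\eps)$. Then $B\mu=0$ forces $\det B=0$, whereas $\det B\to f(2h)^4>0$.
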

\begin{proof}
Project onto a diameter direction and split the five labels across their largest consecutive projection gap $g\ge\diam(x)/4$. If the two groups have masses $M_A,M_B$, their projected centers are separated by
\[
 g\le\Delta=\left(M_A^{-1}+M_B^{-1}\right)
 \sum_{i\in A,j\in B}m_im_j\frac{(x_j-x_i)\cdot e}{|x_j-x_i|^3}
 \le\frac{1+\eps}{g^2}.
\]
This proves the diameter bound.

If satellites approached the primary with $\eps\le E$, take the complete limiting cluster $C$ containing it. Let $d\to0$ be its diameter, $s$ its satellite mass, $b$ its center, and $I_C,U_C$ its internal inertia and potential. Then $U_C\ge s/d$ and $I_C\le sd^2$. The exterior acceleration $A$ has a uniformly bounded derivative near $C$. Summing the cluster equations against $m_i(x_i-b)$ gives
\[
 U_C=I_C+\sum_{i\in C}m_i(x_i-b)\cdot[A(x_i)-A(b)]\le(1+L)I_C.
\]
Cancellation of $s$ contradicts $d\to0$ and proves the uniform lower radius bound.

Now let $C$ be a complete satellite cluster with nonzero limit $q^0$. Average \eqref{eq:G} with the current weights $\mu_i/\sum_{j\in C}\mu_j$. Internal forces cancel, and all exterior distances stay positive. Since $Y$ is bounded, the average is $F_0(q^0)+o(1)=O(\eps)$. Thus $|q^0|=1$; a violating sequence proves uniform convergence of all radii.

Strict convexity gives $0=\varphi_1<\cdots<\varphi_4<\pi$. Summing the first $k$ weighted angular equations gives
\begin{equation}\label{eq:full-cut}
 \sum_{i\le k<j}\mu_i\mu_j\rho_i\rho_j\sin(\varphi_j-\varphi_i)(r_{ij}^{-3}-c_\eps)=0.
\end{equation}
A gap exceeding $h$ by a fixed amount makes every term negative for small $\eps$. If $\varphi_4\to\pi=3h$, all three gaps tend to $h$. But the skew matrix
\[
 B_{ij}=\rho_i\rho_j\sin(\varphi_j-\varphi_i)(r_{ij}^{-3}-c_\eps),\qquad B_{ii}=0,
\]
satisfies $B\mu=0$, whereas its limiting determinant is $f(2h)^4>0$. If $\varphi_4\to0$, all satellite distances vanish and every term of \eqref{eq:full-cut} is positive. These contradictions give \eqref{eq:span}.
\end{proof}

For the remainder of this section, all configurations are strictly convex in the standard directed order. We shall use
\begin{equation}\label{eq:distance}
 r_{ij}^2=(\rho_j-\rho_i)^2+4\rho_i\rho_j\sin^2\frac{\varphi_j-\varphi_i}{2},
\end{equation}
and the strict support inequality for any adjacent hull edge $a\to b$ and any other vertex $j$:
\begin{equation}\label{eq:support}
 \det(q_b-q_a,q_j-q_a)>0.
\end{equation}

\begin{proposition}\label{prop:pair-scale}
Let $\{a,b\}$ be a complete pair, $a<b$, and put $d=|q_b-q_a|$, $\nu=\mu_a+\mu_b$, $\sigma=1-\nu$, $\delta=\varphi_b-\varphi_a$, and $\Delta\rho=\rho_b-\rho_a$. The pair is adjacent and
\begin{equation}\label{eq:pair-scale}
 \nu\to0,\qquad \delta/d\to1,\qquad |\Delta\rho|\le C\eps\sigma d,
 \qquad \nu/d^3\to D>0.
\end{equation}
The two exterior limits are distinct and separated by $h$. In the gauge placing them at $0,h$, with limiting masses $p,1-p$ and pair angle $\psi$,
\[
 pf(\psi)+(1-p)f(\psi-h)=0,\qquad
 D=p\kappa(|\psi|)+(1-p)\kappa(|\psi-h|).
\]
Two complete pairs cannot coexist.
\end{proposition}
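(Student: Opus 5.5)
The plan is to work with the divided tangential equations \eqref{eq:Q} and the radial residuals \eqref{eq:R}, subtracting the rows of $a$ and $b$ so that the singular pair interaction is isolated.

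\textbf{Adjacency.} By Proposition~\ref{prop:uniform}, all $\rho_i\to1$ and the angles are ordered, $0=\varphi_1<\cdots<\varphi_4<\pi$. Any label $c$ with $a<c<b$ has $\varphi_c$ squeezed between $\varphi_a$ and $\varphi_b$, and $\rho_c\to1$. Hence $q_c$ has the same limit as $q_a$ and $q_b$, so $c$ lies in the cluster. Completeness of $\{a,b\}$ then forces $b=a+1$, so the pair is adjacent.

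\textbf{Smallness of the pair masses.} In $Q_a$, the term from $j=b$ is
\[
\mu_b\rho_b\sin\delta\,(d^{-3}-\rho_b^{-3}).
\]
The exterior terms are uniformly bounded, since exterior distances stay positive. Hence $\mu_b|\sin\delta|/d^3=O(1)$, and symmetrically $\mu_a|\sin\delta|/d^3=O(1)$. This already needs the relation between $\delta$ and $d$.

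\textbf{Radial difference and the ratio $\delta/d$.} I would obtain this from $R_b-R_a$. Using $g'(1)=3$,
\[
3\Delta\rho\,(1+o(1))=\eps\nu\,\Delta\rho\,(d^{-3}+O(1))\cdot(\text{bounded})+O(\eps\sigma d),
\]
where the exterior forces and the term $c_\eps X\cdot e(\varphi_i)$ contribute differences of size $O(\eps\sigma d)$. The pair term appears with the factor $\eps$, so once $\eps\nu/d^3\to0$ it is absorbed. This gives $|\Delta\rho|\le C\eps\sigma d$. Then \eqref{eq:distance} gives $d^2=\Delta\rho^2+4\rho_a\rho_b\sin^2(\delta/2)$, hence $\delta/d\to1$.

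There is a mild circularity here: bounding $\mu_b/d^3$ uses $\delta\sim d$, and bounding $\Delta\rho$ uses a bound on $\eps\nu/d^3$. I would break it with a preliminary estimate. Combining the $\mu_b$ and $\mu_a$ bounds gives $\nu\,|\sin\delta|/d^3=O(1)$. Multiplying by $\eps$ shows the pair term in the radial difference is $O(\eps)\cdot|\Delta\rho|/|\sin\delta|$. If $\Delta\rho$ dominated $\delta$, the radial equation would force $\Delta\rho=O(\eps\sigma d)=o(d)$, a contradiction. Once $\delta\sim d$, we get $\mu_a,\mu_b=O(d^3)\to0$, so $\nu\to0$.

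\textbf{Exterior limits.} Since $\nu\to0$, the two exterior labels $j,k$ carry masses tending to $p,1-p$ with $p+(1-p)=1$. Their own divided equations $Q_j,Q_k$ lose the pair contribution, which is of order $\nu\to0$ because the pair is at positive distance from them. In the limit they reduce to $f(\varphi_k-\varphi_j)=0$, provided the two limits are distinct. If they coincided, $\{j,k\}$ would be a second complete pair with mass tending to $0$, together with $\nu\to0$. That contradicts $\sum\mu_i=1$, and the same argument proves that two complete pairs cannot coexist. Distinct limits with $f=0$ on $(0,\pi)$ give separation exactly $h$, so I place them at $0,h$ by rotation.

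\textbf{Probe equation and the constant $D$.} The pair centre at angle $\psi$ then satisfies the limit of $Q_a$ (or $Q_b$) with the singular pair term removed. Averaging $\mu_aQ_a+\mu_bQ_b$ cancels the internal forces and yields $pf(\psi)+(1-p)f(\psi-h)=0$.

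For $D$, I take $Q_b-Q_a$. The internal part is $\nu\sin\delta/d^3$ up to $O(\nu\delta)$. The exterior part is the difference of $\sum\mu_j\rho_j\sin\Delta(r^{-3}-\rho_j^{-3})$ between the two nearby points. By the derivative computation in \eqref{eq:kappa}, with $\kappa=-f'$, and with $\Delta\rho=o(d)$, this difference equals $-\delta\bigl(p\kappa(|\psi|)+(1-p)\kappa(|\psi-h|)\bigr)+o(\delta)$. Dividing by $\delta$ and using $\delta/d\to1$ gives $\nu/d^3\to D$ with the stated formula, and Lemma~\ref{lem:probe} gives $D>0$.

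\textbf{Main obstacle.} I expect the hardest step to be the exterior-difference expansion in $Q_b-Q_a$. One must show that the radial offsets and the $\eps$-dependent terms ($c_\eps$, $r_{ij}$ versus the chord) contribute only $o(\delta)$. This relies on the estimate $|\Delta\rho|\le C\eps\sigma d$ obtained above, and I would do this step first.
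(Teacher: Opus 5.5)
Your proposal has a genuine gap at the step giving $\delta/d\to1$, that is, at excluding a radially aligned pair.

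\textbf{Why your argument fails.} Your circularity-breaking argument does not work. From $\nu|\sin\delta|/d^3=O(1)$ you only get $\lambda:=\eps\nu/d^3=O(\eps/|\sin\delta|)$. So the pair term in $R_b-R_a$ is bounded only by $O(\eps|\Delta\rho|/|\sin\delta|)$. That bound is vacuous precisely in the regime $\delta=o(|\Delta\rho|)$ you want to rule out. There the radial difference reads $(3-\lambda+o(1))\Delta\rho=O(\eps d)$, which is perfectly consistent with $\lambda\to3$.

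This is not a technicality. Euler-type configurations, with two satellites at Hill distance $d^3\approx\eps\nu/3$ on a common ray from the primary, solve the equations with exactly this alignment. Hence no argument from the equations alone can give $\delta/d\to1$. Strict convexity has to enter, and you use it only to order the angles.

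There is a related problem. Bounding $\mu_b$ from $Q_a$ alone gives only $\nu=O(d^2)$ and $\lambda=O(\eps/d)$ even once $\delta\sim d$, and this need not tend to zero. You need the subtracted identity $Q_b-Q_a$, whose exterior side is $O(\sigma d)$ by the Lipschitz bound. It gives $\delta\nu/d^3\le C\sigma d$, hence $\nu/d^3\le C\sigma$ and $\lambda=O(\eps)$. Your claim $\mu_a,\mu_b=O(d^3)$ silently assumes this sharper estimate.

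\textbf{The missing idea: a dichotomy plus a convexity argument.} The vector difference $F_0(q_b)-F_0(q_a)+\eps[P(q_b)-P(q_a)]=\lambda(q_b-q_a)$ bounds $\lambda$. Since $DF_0\to\diag(3,0)$ in the radial--tangential frame, a limiting unit direction of $q_b-q_a$ is either radial with $\lambda\to3$ or tangential with $\lambda\to0$.

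In the radial case, use the support inequality \eqref{eq:support} for the hull edge $a\to b$. It expands as $\Delta\rho\,\rho_j\sin(\varphi_j-\varphi_a)+o(d)>0$. For an interior pair this is impossible, since it would need both signs of $\Delta\rho$. For a left-endpoint pair it forces $\Delta\rho>0$ (signs reversed at the right end). But there the exterior angular field has $H_\rho\le-\sigma/C$. So the left side of the subtracted tangential identity $H(\rho_b,\varphi_b)-H(\rho_a,\varphi_a)=\sin\delta\,(\cdots)>0$ is at most $-\sigma|\Delta\rho|/C+C\sigma\delta<0$, a contradiction.

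Only after this do $\delta/d\to1$, $\nu\to0$ and the radial bound follow. For the radial bound, project the vector difference onto the midpoint ray, so that the common $Y$ term (your $c_\eps X$ term) cancels exactly. In $R_b-R_a$ that term is only $O(\eps\delta)$, not $O(\eps\sigma d)$.

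Your remaining steps are fine once this is in place: the separation $h$, the probe equation, the formula for $D$, and the exclusion of two pairs. The step you flagged as the main obstacle is in fact routine.
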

\begin{proof}
Equal-angle classes are consecutive blocks. The exterior fields
\[
 P(q)=\sum_{j\notin\{a,b\}}\mu_j\frac{q_j-q}{|q_j-q|^3},\qquad
 H(\rho,v)=\sum_{j\notin\{a,b\}}\mu_j\rho_j\sin(\varphi_j-v)
 (|\rho e(v)-q_j|^{-3}-\rho_j^{-3})
\]
have first derivatives $O(\sigma)$ near the pair. Subtraction of \eqref{eq:Q} gives
\begin{equation}\label{eq:pair-qdiff}
 H(\rho_b,\varphi_b)-H(\rho_a,\varphi_a)
 =\sin\delta\left(\frac{\mu_a\rho_a+\mu_b\rho_b}{d^3}
 -\frac{\mu_a}{\rho_a^2}-\frac{\mu_b}{\rho_b^2}\right)>0.
\end{equation}
Since $\delta\le Cd$, this implies
\begin{equation}\label{eq:pair-prebound}
 \delta\nu/d^3\le C\sigma d.
\end{equation}
The vector difference is
\begin{equation}\label{eq:pair-vdiff}
 F_0(q_b)-F_0(q_a)+\eps[P(q_b)-P(q_a)]=\lambda(q_b-q_a),
 \qquad \lambda=\eps\nu/d^3.
\end{equation}
Its integral derivative bounds $\lambda$. Since $DF_0\to\diag(3,0)$ in the radial--tangential basis, a limiting unit pair direction is radial with $\lambda\to3$, or positively tangential with $\lambda\to0$.

In a radial subsequence, $\delta=o(d)$ and $|\Delta\rho|\sim d$. The support inequality expands as
\[
 \det(q_b-q_a,q_j-q_a)=\Delta\rho\,\rho_j\sin(\varphi_j-\varphi_a)+o(d).
\]
An interior pair would require opposite signs of $\Delta\rho$. For a left endpoint pair it requires $\Delta\rho>0$, whereas on its polar rectangle
\[
 H_\rho=-3\sum_{j\notin\{a,b\}}\mu_j\rho_j\sin(\varphi_j-v)
 [\rho-\rho_j\cos(\varphi_j-v)]|\rho e(v)-q_j|^{-5}\le-\sigma/C.
\]
Thus the left side of \eqref{eq:pair-qdiff} is at most $-\sigma|\Delta\rho|/C+C\sigma\delta<0$. At the right endpoint both radial signs reverse, giving the same contradiction.

Consequently $\delta/d\to1$, $\Delta\rho/d\to0$, and \eqref{eq:pair-prebound} yields $\nu/d^3\le C\sigma$, hence $\nu\to0$. Projecting \eqref{eq:pair-vdiff} onto the midpoint ray gives exactly
\begin{equation}\label{eq:midpoint}
 [g(\rho_b)-g(\rho_a)-\lambda\Delta\rho]\cos(\delta/2)
 =-\eps[P(q_b)-P(q_a)]\cdot e((\varphi_a+\varphi_b)/2).
\end{equation}
Since $g'\to3$, $\lambda\to0$, and $DP=O(\sigma)$, this proves the radial bound.

A second complete pair would also have vanishing total mass, which is impossible. The exterior angular equations therefore give separation $h$. The pair's weighted $T$-average, divided by $\nu$ after internal cancellation, gives the probe equation. Finally divide \eqref{eq:pair-qdiff} by $\delta$: its radial contribution tends to zero, its angular derivative tends to $D$, and its right side is $\nu/d^3+o(1)$. Lemma~\ref{lem:probe} gives $D>0$.
\end{proof}

\begin{proposition}\label{prop:triple-scale}
Let $E$ be a complete triple with exterior label $p$, $\nu=\sum_{i\in E}\mu_i$, $\sigma=1-\nu$, diameter $d$, and angular width $W$. It is an endpoint block and
\begin{equation}\label{eq:triple-scale}
 \begin{gathered}
 \nu\to0,\quad\sigma\to1,\quad|\varphi_p-\varphi_i|\to h\quad(i\in E),\\
 \max_{i,j\in E}|\rho_i-\rho_j|=o(d),\qquad W/d\to1,\qquad
 0<C^{-1}\le\nu/d^3\le C\sigma.
 \end{gathered}
\end{equation}
\end{proposition}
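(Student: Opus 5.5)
The plan follows the proof of Proposition~\ref{prop:pair-scale}, applied to a complete triple $E=\{a,b,c\}$ with the single exterior label $p$. Equal-angle classes are consecutive blocks, so $E$ occupies three consecutive hull positions. The first task is to show that $E$ is an endpoint block, i.e.\ $p=1$ or $p=4$, and that $|\varphi_p-\varphi_i|\to h$.

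\emph{Step 1: mass and separation.} Since the satellite masses sum to one, a triple with nonvanishing limiting mass would force $\mu_p\to$ some value $<1$ together with a cluster whose total weight stays positive. I would first average the angular equations \eqref{eq:T} over $E$ with weights $\mu_i$. Internal forces cancel, which leaves $\mu_p\nu\,\rho_p\bar\rho\sin(\varphi_p-\bar\varphi)(\,\cdot\,-c_\eps)+o(1)=0$. Here I also use the cut identity \eqref{eq:full-cut} at the cut between $E$ and $p$, which holds exactly. The limit then gives $\nu\sigma f(|\varphi_p-\bar\varphi|)=0$ for the two limiting positions. If $\nu\not\to0$ and $\sigma\not\to0$, the separation is $h$ directly. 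The case $\sigma\to0$ ($\mu_p\to0$) must be excluded separately. In that case $p$ is a probe of a single heavy point, so it sits at angle $\pm h$ from the cluster. The cluster itself then behaves as a three-body central configuration at scale $d$, and the internal $T$-differences give $\nu/d^3\to\infty$ relative to the exterior field. I would derive the bound $\nu/d^3\le C\sigma$ exactly as in \eqref{eq:pair-prebound}. I subtract the $Q$ equations \eqref{eq:Q} for the two extreme members of $E$. Their left side is an exterior field difference of size $O(\sigma W)$. Their right side, by the support inequality and the sign of $r^{-3}-\rho^{-3}$ at small distances, is a positive sum of order $\nu W/d^3$ once the middle member's contribution is controlled. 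This yields $\nu/d^3\le C\sigma$, hence $\nu\to0$ and $\sigma\to1$, and the separation $|\varphi_p-\varphi_i|\to h$ follows. An interior position of $p$ is impossible, since the span bound \eqref{eq:span} and the gap bound $\limsup\le h$ of Proposition~\ref{prop:uniform} force $p$ to one side. So $E$ is an endpoint block.

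\emph{Step 2: shape of the triple.} Next I rule out radial limiting directions as in the radial subsequence argument of Proposition~\ref{prop:pair-scale}. I rescale $q_i-q_a$ by $d$. The vector differences \eqref{eq:G} give $DF_0(q_i-q_j)+O(\eps\sigma d)=\eps\cdot(\text{internal})$. The internal force is of order $\eps\nu/d^2$, so the limiting rescaled triple satisfies a planar three-body equation with linear term $\diag(3,0)$, a Hill-type degenerate limit. The support inequality \eqref{eq:support} for adjacent edges within $E$ and edges to $p$ forces consistent radial signs. Combined with $H_\rho\le-\sigma/C$ on the polar rectangle, exactly as before, this excludes any nonzero radial component at order $d$. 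The outcome is $\max|\rho_i-\rho_j|=o(d)$ and hence $W/d\to1$ by \eqref{eq:distance}.

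\emph{Step 3: lower bound.} For $\nu/d^3\ge C^{-1}$, the rescaled triple is collinear in the tangential direction. The tangential balance then reads $\text{internal}/d^3\cdot\nu=$ the exterior tangential gradient. That gradient is $\sigma\kappa(h)$ times the spread, by linearization of $f$ at $h$, with $\kappa(h)=9/4$. A degenerate ratio $\nu/d^3\to0$ would make the exterior linear term unbalanced for the extreme members, since $\kappa(h)>0$. This is the analogue of $D=p\kappa+\cdots$ with a single source.

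\emph{Main obstacle.} I expect the main obstacle to be Step 2, specifically the case where only one sub-pair of the triple is radial. There, separate scales inside $E$ may appear, and I would need the support inequality for the middle vertex together with the sub-pair analysis of Proposition~\ref{prop:pair-scale} applied to the closer pair inside $E$.
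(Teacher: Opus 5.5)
Your plan has a genuine gap at its central step. The mechanism you give for $\max_{i,j\in E}|\rho_i-\rho_j|=o(d)$ does not produce it, because the support inequality \eqref{eq:support} and the sign of the exterior radial derivative only compare the radial spread with the angular width. Take $E=\{1,2,3\}$, $p=4$, and let $J_i$ be the exterior angular term at member $i$ (your $H$ plays the same role). The extreme angular equations give $J_1<0<J_3$, hence $0<J_3-J_1\le-(\rho_3-\rho_1)/C+CW$ when $\rho_3\ge\rho_1$. Support against $p$ gives $\rho_j-\rho_i\ge-C(\varphi_j-\varphi_i)$. Together these yield a radial spread of $O(W)$ and $d\asymp W$, and nothing more: a tilted triple with $\rho_3-\rho_1\approx W/2$ satisfies both inequalities.

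That $O(W)$ bound is exactly what your Step~1 uses implicitly when it calls the exterior difference $O(\sigma W)$ rather than $O(\sigma d)$, so it has to come first. It needs the separation $h$, which you get directly by dividing the exact cut identity \eqref{eq:full-cut} by $\sigma\nu>0$. Your case split on $\sigma$ is unnecessary, and as written it gives nothing in the actual case $\nu\to0$. With the $O(W)$ bound in hand, $\nu W/d^3\le C\sigma W$, so $\nu/d^3\le C\sigma$, $\nu\to0$ and $\sigma\to1$. Only then does the vector equation \eqref{eq:G} at order $d$ give the $o(d)$ radial statement. Since $\eps\nu/d^3=O(\eps)$, there is no nontrivial Hill-type limit, and $DF_0\to3e(\psi)e(\psi)^T$ kills radial differences, \emph{provided} the internal forces are $o(d)$.

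That proviso is the obstacle you flag, and it also breaks your Step~3. Suppose a sub-pair, say $\{1,2\}$, lives at a scale $s\ll d$. Then $\mu_p(J_3-J_1)$ equals a positive internal sum containing $\mu_2\rho_1\rho_2\sin(\varphi_2-\varphi_1)(r_{12}^{-3}-c_\eps)\approx\mu_2/s^2$. The hypothesis $\nu/d^3\to0$ does not make this term $o(d)$, so the extreme members alone give no contradiction. The missing idea is not the nested pair analysis (in the paper that is proved afterwards, on top of this proposition) but averaging over groups. Extract limits of $(q_i-q_1)/d$ and group coincident limits; there are at least two groups. Average \eqref{eq:G} and the angular equations over each group with its current normalized masses. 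Then:
\begin{itemize}
\item intra-group forces cancel exactly;
\item cross-group forces are $O(\nu/d^2)=O(\sigma d)$, and carry a factor $\eps$ in \eqref{eq:G};
\item each group has diameter $o(d)$, so the exterior fields vary by $o(d)$ on it.
\end{itemize}
Subtracting group averages gives $F_0(q_i)-F_0(q_j)=o(d)$ for representatives, hence radial differences $o(d)$ and $W/d\to1$ by \eqref{eq:distance}. If $\nu/d^3\to0$, the cross-group angular terms are also $o(d)$. Since $\partial_vJ\to9/4$, this gives $\tfrac94(\varphi_i-\varphi_j)=o(d)$ for representatives of distinct groups, a contradiction; that is the lower bound. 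Finally, the endpoint-block claim needs no span argument: three consecutive labels among four contain $1$ or $4$.
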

\begin{proof}
It suffices to treat $E=\{1,2,3\}$, $p=4$. Put
\[
 J(\rho,v)=\rho\rho_p\sin(\varphi_p-v)(|\rho e(v)-q_p|^{-3}-c_\eps),\qquad
 J_i=J(\rho_i,\varphi_i).
\]
Summing the weighted triple equations and cancelling $\sigma\nu$ gives $f(\varphi_p^0-\psi)=0$ at the limiting center $\psi$. Separation and \eqref{eq:span} force $\varphi_p^0-\psi=h$. On the triple's polar rectangle,
\begin{equation}\label{eq:triple-J}
 J_\rho\to-3\sqrt3/4,\qquad J_v\to D_0:=9/4.
\end{equation}
Expanding \eqref{eq:support} against the exterior gives $\rho_j-\rho_i\ge-C(\varphi_j-\varphi_i)$ for adjacent, hence all ordered, triple pairs. The first and last angular equations give $J_1<0<J_3$. If $\rho_3\ge\rho_1$, integration of \eqref{eq:triple-J} yields
\[
 0<J_3-J_1\le-(\rho_3-\rho_1)/C+CW.
\]
The opposite case uses the support bound. All intermediate radii lie between $\rho_1-CW$ and $\rho_3+CW$, so the radial spread is $O(W)$ and $d\asymp W$.

For internal pairs, $k_{ij}=\rho_i\rho_j s(\varphi_j-\varphi_i)(r_{ij}^{-3}-c_\eps)\ge C^{-1}d^{-3}$. Subtracting the first and last angular equations, each internal mass multiplies angular distances summing to $W$. Hence
\[
 \nu W/d^3\le C\sigma(J_3-J_1)\le C\sigma W.
\]
Thus $\nu/d^3\le C\sigma$, $\nu\to0$, and $\sigma\to1$.

Along any further subsequence, extract limits of $(q_i-q_1)/d$ and group equal limits. There are at least two groups. Average \eqref{eq:G} within each group with its current normalized masses. Internal forces cancel; other groups contribute at most $C\nu/d^2=O(d)$ before multiplication by $\eps$. Each group has diameter $o(d)$, while the exterior field has bounded derivative. Subtracting the averages gives, for representatives,
\[
 F_0(q_i)-F_0(q_j)=o(d)+O(\eps d).
\]
Since $DF_0\to3e(\psi)e(\psi)^T$, radial differences between groups are $o(d)$, and the same holds within groups. Equation~\eqref{eq:distance} now gives $W/d\to1$.

If $\nu/d^3\to0$, repeat these normalized averages in the angular equations. Cross-group interactions are $O(\nu/d^2)=o(d)$, and each averaged $J$ differs from its representative value by $o(d)$. Subtraction, \eqref{eq:triple-J}, and the radial estimate give $D_0(\varphi_i-\varphi_j)=o(d)$, contradicting distinct scaled groups. This proves the lower bound. Reflection gives the other endpoint block.
\end{proof}

\begin{proposition}\label{prop:nested-scale}
Suppose the shortest internal distance $s$ in Proposition~\ref{prop:triple-scale} satisfies $s/d\to0$. Choose fixed shortest-pair labels $a<b$, and let $c$ be the third label. Put $\xi=\mu_a+\mu_b$, $\delta=\varphi_b-\varphi_a$, and $\Delta\rho=\rho_b-\rho_a$. The pair is adjacent and
\begin{equation}\label{eq:nested-scale}
 \xi/\nu\to0,\qquad \nu/d^3\to D_0,\qquad \xi/s^3\to3D_0,
 \qquad \delta/s\to1,\qquad |\Delta\rho|\le C\eps s.
\end{equation}
These limits do not require convergence of $\mu_a/\xi$.
\end{proposition}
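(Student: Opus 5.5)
We work inside the complete triple $E=\{1,2,3\}$ with exterior label $p=4$, as in the proof of Proposition~\ref{prop:triple-scale}. The pair $\{a,b\}$ is a sub-cluster of size $s=o(d)$, and the plan is to rerun the argument of Proposition~\ref{prop:pair-scale} at scale $s$. In that rerun the third label $c$ plays the role of the exterior body, and the triple's own exterior label $4$ contributes only smooth background terms.

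\emph{Adjacency.} By Proposition~\ref{prop:triple-scale} the triple radii agree to $o(d)$ and $W/d\to1$, so the angles are ordered along the block. If $\{a,b\}$ were not adjacent, then $c$ would lie angularly between them. This would force $|\varphi_c-\varphi_a|\le\delta\le Cs=o(d)$ and hence $|q_c-q_a|=o(d)$, which contradicts $d=\diam E$ once both of the other distances are also $o(d)$. So $\{a,b\}$ is an adjacent pair.

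\emph{The two-body-at-scale-$s$ analysis.}
\begin{enumerate}
\item Subtract the $Q$-equations \eqref{eq:Q} for $a$ and $b$, as in \eqref{eq:pair-qdiff}. The exterior field now consists of $\mu_c$ at distance $\asymp d$ and the label $4$. Differentiating it along the pair costs $O(\mu_c/d^3+\sigma)\cdot s$.
\item The internal term is $\sin\delta\,(\xi/s^3)$. This yields $\delta\xi/s^3\le C(\nu/d^3+1)s$.
\item The vector difference \eqref{eq:pair-vdiff} gives $\lambda=\eps\xi/s^3$ bounded. As before, the limiting pair direction is either radial with $\lambda\to3$ or tangential with $\lambda\to0$.
\item Exclude the radial case exactly as in Proposition~\ref{prop:pair-scale}. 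The support inequality \eqref{eq:support}, expanded against $c$ and $4$, fixes the sign of $\Delta\rho$, and the strictly negative $H_\rho$ (now dominated by label $4$ with $\sigma\to1$) then gives a contradiction.
\item So $\delta/s\to1$. Projecting onto the midpoint ray as in \eqref{eq:midpoint} gives $|\Delta\rho|\le C\eps s$.
\end{enumerate}

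\emph{The scale limits.} The remaining task is to identify the limits of $\nu/d^3$ and $\xi/s^3$.
\begin{enumerate}
\item Apply the group-averaging step of Proposition~\ref{prop:triple-scale}. The scaled groups are the pair (collapsed to one point) and $c$, and there are exactly two of them.
\item The averaged angular equations for the two groups give $D_0W\cdot(\text{something})$ balanced by $\nu/d^3$. Concretely, the pair's centre and $c$ are separated angularly by $\sim d$, and the $J$-difference $D_0 d$ must equal the internal attraction $\nu\, d/d^3$ up to the weights. Dividing by $d$ should give $\nu/d^3\to D_0=9/4$, with the mass weights cancelling because the momentum-type sum eliminates them.
\item From $\nu/d^3\to D_0>0$ and $\xi\ll\nu$ one gets $\xi/\nu\to0$, as follows. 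The averaged equation of the pair, divided by its mass $\xi$, is a probe equation at $c$ of strength $\mu_c/d^3$. This forces $\mu_c/d^3$ to stay comparable to $D_0$, so $\mu_c/\nu\to1$.
\item For $\xi/s^3$, divide the $Q$-difference by $\delta$ as at the end of Proposition~\ref{prop:pair-scale}. The left side tends to the derivative of the exterior angular field. This field is the label-$4$ part $D_0$ plus the $c$-part $\mu_c\cdot 2/d^3\to 2D_0$, since the tangential derivative of an inverse-square attraction along the line is $2\mu/r^3$. The total is $3D_0$, matching the stated limit.
\end{enumerate}
None of these steps uses the ratio $\mu_a/\xi$, because every estimate involves only $\xi$ and weighted averages.

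\emph{Main obstacle.} The delicate step is the bookkeeping of the exterior derivative at scale $s$, where the $c$-field is of size $\nu/d^3\asymp1$ rather than $O(\sigma)$. In particular, one must confirm that the signs from the support inequality survive when the $c$-contribution to $H_\rho$ is no longer small. I would first try to show that $c$'s contribution has the same sign as label $4$'s, using that $c$ lies at angle $\pm d$ on the far side of the endpoint block.
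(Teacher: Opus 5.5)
Your overall route is the paper's: rerun the pair analysis at scale $s$ with $c$ and $p$ as exterior sources, get $\nu/d^3$ from the averaged angular equations of the two scaled groups, and get $\xi/s^3$ by dividing the $Q$-difference by $\delta$. For the second step the paper uses the exact identity $0=\nu K+\sigma(J_0-J_c)$; the cross term carries weight $\mu_c$ in the pair average and $\xi$ in the equation of $c$.

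\textbf{First gap: the radial case.} The step you flag as the main obstacle is not resolved, and the fix you propose would fail. Write $\zeta=\pm1$ for a left/right triple and $\omega=\operatorname{sign}(\varphi_c-\varphi_a)$. In the radial case, support against $c$ gives $\operatorname{sign}\Delta\rho=\omega$ and support against $p$ gives $\operatorname{sign}\Delta\rho=\zeta$. So the $H_\rho$ argument is needed exactly when $\omega=\zeta$. There the $c$-part of $H_\rho$ is $-3\mu_c\rho_c\sin\Delta_c(\rho-\rho_c\cos\Delta_c)D_c^{-5}$, and its sign is not controlled. Indeed $\rho-\rho_c\cos\Delta_c=(\rho-\rho_c)+\rho_c(1-\cos\Delta_c)$. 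Proposition~\ref{prop:triple-scale} only says the first summand is $o(d)$, with no sign, and it need not be small compared with the $O(d^2)$ second summand. So a same-sign argument is unavailable.

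The missing idea is a size bound, not a sign. On the pair's polar rectangle you have $|\sin\Delta_c|\le Cd$, $D_c\asymp d$, and radial spread $o(d)$. Hence the $c$-part is $O\bigl((\mu_c/d^3)(|\rho-\rho_c|/d+d)\bigr)=o(1)$, using $\mu_c/d^3\le\nu/d^3\le C$. Then $\zeta H_\rho\le-C^{-1}$ comes from $p$ alone, which contributes $-\zeta\,3\sqrt3/4$, and $|H_v|\le C$. Support against $p$ alone gives $\operatorname{sign}\Delta\rho=\zeta$, so the left side of \eqref{eq:pair-qdiff} is at most $-|\Delta\rho|/C+C\delta<0$, for either value of $\omega$.

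\textbf{Second gap: $\xi/\nu\to0$.} Your argument here is circular, since it starts from $\xi\ll\nu$. The pair's averaged equation cannot supply it on its own. It balances $\mu_c/d^2$ against the exterior field $J_0$ at the pair, and $J_0$ depends on the unknown offset of the pair from the zero of $J$. Even two-sided comparability of $\mu_c/d^3$ with $D_0$ would only give $\mu_c/\nu$ bounded below, not $\mu_c/\nu\to1$.

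Use your own pair estimate instead. Once $\delta/s\to1$, your bound $\delta\xi/s^3\le C(\nu/d^3+1)s$ gives $\xi/s^3\le C$. The lower bound $\nu/d^3\ge C^{-1}$ from Proposition~\ref{prop:triple-scale} and $s/d\to0$ then give $\xi/\nu=\frac{\xi/s^3}{\nu/d^3}(s/d)^3\to0$. This has to come before your last step: $H_v\to3D_0$ needs $\mu_c/d^3\to D_0$, which requires both $\nu/d^3\to D_0$ and $\xi/\nu\to0$.
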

\begin{proof}
The shortest pair is adjacent: otherwise $W=\delta\le Cs=o(d)$. Let $\zeta=1$ for a left triple and $-1$ for a right triple, and $\omega=\operatorname{sign}(\varphi_c-\varphi_a)$. The distances from the pair to $c$ are $d(1+o(1))$; their angular differences divided by $d$ tend to $\omega$, and their radial differences are $o(d)$. These estimates hold throughout the pair's segment and polar rectangle.

Define $P,H$ as in the pair proof, now summing over $j=c,p$, and put $D_j=|\rho e(v)-q_j|$, $\Delta_j=\varphi_j-v$. Since $\mu_c/d^3\le C$, $DP=O(1)$. Differentiation gives
\begin{align}
 H_\rho&=-3\sum_{j=c,p}\mu_j\rho_j\sin\Delta_j(\rho-\rho_j\cos\Delta_j)D_j^{-5},\label{eq:nested-field}\\
 H_v&=\sum_{j=c,p}\mu_j\rho_j[-\cos\Delta_j(D_j^{-3}-\rho_j^{-3})+3\rho\rho_j\sin^2\Delta_jD_j^{-5}].\notag
\end{align}
The near-source part of $H_\rho$ is bounded by
$C(\mu_c/d^3)(|\rho-\rho_c|/d+d)=o(1)$, while its contribution to $H_v$ is $2\mu_c/d^3+o(1)$. The exterior contributes $-\zeta3\sqrt3/4$ and $D_0$, respectively. Thus
\begin{equation}\label{eq:nested-derivatives}
 \zeta H_\rho\le-C^{-1},\qquad |H_\rho|+|H_v|\le C,
 \qquad H_v=D_0+2\mu_c/d^3+o(1).
\end{equation}
The vector difference \eqref{eq:pair-vdiff}, with $d,\nu$ replaced by $s,\xi$, again has only radial or tangential limits. In the radial case, support against $p$ forces $\operatorname{sign}(\Delta\rho)=\zeta$. Since $\delta=o(s)$ and $|\Delta\rho|\sim s$, \eqref{eq:nested-derivatives} makes the left side of \eqref{eq:pair-qdiff} negative, contradicting its positive right side. Hence $\lambda\to0$, $\delta/s\to1$, and $\Delta\rho/s\to0$. The angular difference gives $\xi/s^3\le C$, whence
\[
 \frac\xi\nu=\frac{\xi/s^3}{\nu/d^3}\left(\frac sd\right)^3\to0.
\]
The midpoint identity \eqref{eq:midpoint}, with $DP=O(1)$, gives $|\Delta\rho|\le C\eps s$.

Average the pair's $T$ equations with weights $\mu_a/\xi,\mu_b/\xi$ and subtract the equation of $c$. Let $K$ be the averaged variational kernel from the pair to $c$, and $J_0$ its averaged exterior field. Internal cancellation gives exactly
\begin{equation}\label{eq:nested-macro-average}
 0=\nu K+\sigma(J_0-J_c).
\end{equation}
The triple estimates imply $K=\omega d^{-2}(1+o(1))$ and $J_0-J_c=-D_0\omega d+o(d)$. Division by $\omega d$ gives $\nu/d^3\to D_0$, hence $\mu_c/d^3\to D_0$. Now $H_v\to3D_0$ in \eqref{eq:nested-derivatives}; dividing the pair angular difference by $\delta$, with $\Delta\rho/\delta=O(\eps)$, gives $\xi/s^3\to3D_0$. All averages use current normalized masses, so the conclusions hold on the selected fixed-label sequence.
\end{proof}

Every sequence therefore has, after extraction, one of four types: no satellite collision, a complete pair, a triple with comparable internal distances, or a triple containing a smaller pair. The span bound excludes a four-member cluster, and Proposition~\ref{prop:pair-scale} excludes two pairs.

\section{Regularized solution charts}
\label{sec:charts}
We use the residuals \eqref{eq:R}--\eqref{eq:T}, changing only coordinates and rows. Put
\begin{equation}\label{eq:chi}
 \chi(t,z)=\tfrac12tz^2s(tz/2)^2,\qquad 1-\cos(tz)=t\chi(t,z).
\end{equation}
The extensions to signed scales below are analytic extensions of the positive-scale equations. For a smooth target field $K$,
\begin{equation}\label{eq:hadamard}
 \frac{K(\rho+tu,v+tz)-K(\rho,v)}t
 =\int_0^1(uK_\rho+zK_v)(\rho+\lambda tu,v+\lambda tz)\dd\lambda.
\end{equation}
Here source positions and the common vector $X$ are fixed in the target difference; subsequent differentiation includes their dependence on the variables.

One formula covers the internal terms of both single-scale charts. If $\mu_i=t^3\alpha_i$, $\rho_i=R+tw_i$, $\varphi_i=\psi+tz_i$, set $a_{ij}=w_i-w_j$, $b_{ij}=z_i-z_j$ and
\[
 W_{ij}^2=a_{ij}^2+\rho_i\rho_jb_{ij}^2s(tb_{ij}/2)^2,
 \qquad r_{ij}=tW_{ij}\quad(t>0).
\]
The internal contributions to $R_i,T_i$ are, respectively,
\begin{equation}\label{eq:regular-int}
 \begin{aligned}
 &-\eps t\sum_{j\ne i}\alpha_j[a_{ij}+\rho_j\chi(t,b_{ij})]W_{ij}^{-3},\\
 &-t\sum_{j\ne i}\alpha_j\rho_i\rho_jb_{ij}s(tb_{ij})(W_{ij}^{-3}-t^3c_\eps).
 \end{aligned}
\end{equation}
Thus their divided differences are smooth wherever $W_{ij}>0$. Their weighted angular sum vanishes, and their weighted radial sum is
\[
 -\eps\sum_{i<j}\alpha_i\alpha_j(\rho_i+\rho_j)
 [1-\cos(tb_{ij})]W_{ij}^{-3}.
\]
All exterior terms are smooth target fields and use \eqref{eq:hadamard}.

At a distinct limiting root with mass $\mu^0$ in the closed simplex, choose $p$ with $\mu_p^0\ge1/4$, fix $\varphi_p=0$ and omit $T_p$. The seven residuals are smooth for nearby signed masses of sum one. Their derivative at $\eps=0$ has the form
\[
 \begin{pmatrix}3I_4&0\\ *&K\end{pmatrix},
\]
where $K$ is the reduced derivative from Proposition~\ref{prop:closed}. Lemma~\ref{lem:product} gives a unique root on an independent product in $\eps$ and the mass coordinates. Convexity persists since all cyclic triple areas have positive limits; Lemma~\ref{lem:hessian} gives nondegeneracy at positive masses. Convergence places every sequence with these limiting data in the root neighborhood. The omitted angular equation follows from \eqref{eq:polar-gradient}.

For a complete pair, fix limiting exterior angles $0,h$, exterior mass fraction $p_0$, and pair angle $\psi_0$ as in Proposition~\ref{prop:pair-scale}. Let $D>0$ be the probe derivative there and $z_0=D^{-1/3}$. For independent parameters $(\eps,t,p,\alpha)$, put $\beta=1-\alpha$ and
\begin{gather}
 \mu_a=t^3\alpha,\quad\mu_b=t^3\beta,\quad
 \mu_c=(1-t^3)p,\quad\mu_d=(1-t^3)(1-p),\label{eq:pair-chart-mass}\\
 \varphi_c=0,\quad\varphi_d=\omega,\quad
 \rho_a=R-\beta tu,\quad\rho_b=R+\alpha tu,\quad
 \varphi_a=\psi-\beta tz,\quad\varphi_b=\psi+\alpha tz.\label{eq:pair-coordinates}
\end{gather}
Use variables $(\rho_c,\rho_d,R,u,\omega,\psi,z)$. For either row family $L=R,T$, put $A_L=\alpha L_a+\beta L_b$, $B_L=(L_b-L_a)/t$, and retain
\[
 F=(R_c,R_d,A_R,B_R,T_o,A_T,B_T),\qquad
 o=\begin{cases}d,&p_0\ge1/2,\\c,&p_0<1/2.\end{cases}
\]
The omitted exterior row has positive mass bounded away from zero. Both coordinate changes have determinant $t$; the inverse row formulas $L_a=A_L-\beta tB_L$, $L_b=A_L+\alpha tB_L$ require no division by a mass.

Here
\begin{equation}\label{eq:pair-W}
 r_{ab}^2=t^2W^2,\qquad W^2=u^2+\rho_a\rho_bz^2s(tz/2)^2.
\end{equation}
Equation~\eqref{eq:regular-int} and \eqref{eq:hadamard} extend every row smoothly near $W=z_0>0$, uniformly for $\alpha$ in an interval containing $[0,1]$. At $\eps=t=0$ the radial rows are $g(\rho_c),g(\rho_d),g(R),g'(R)u$. With $(p_c,p_d)=(p,1-p)$ and
\[
 H(\rho,v)=\rho\sum_{j=c,d}p_j\rho_j\sin(\varphi_j-v)
 (|\rho e(v)-q_j|^{-3}-1),
\]
the cluster angular rows become
\begin{equation}\label{eq:pair-boundary}
 A_T=H(R,\psi),\qquad
 B_T=H_\rho u+H_vz-\frac{R^2z}{(u^2+R^2z^2)^{3/2}}.
\end{equation}
At $y^0=(1,1,1,0,h,\psi_0,z_0)$ the radial block is $(3I_4,0)$ and the angular block is lower triangular, with diagonal
\begin{equation}\label{eq:pair-J}
 (a_0,D,3D),\qquad
 a_0=\begin{cases}D_0p_0,&o=d,\\-D_0(1-p_0),&o=c.\end{cases}
\end{equation}
Since $|a_0|\ge9/8$, the derivative is invertible. The base and derivative are independent of $\alpha$, so Lemma~\ref{lem:product} gives a common root on an independent $(\eps,t,p)$ product, uniformly in $\alpha\in[0,1]$. At $\eps=0$ row recovery gives unit radii and $u=0$; hence $u=O(\eps)$ uniformly.

Convexity follows from the areas containing the pair. Set $q_0=0$ and $S_j=\det(q_b-q_a,q_j-q_a)$. At the base point, their smooth quotients satisfy
\begin{equation}\label{eq:pair-areas}
 (S_0/t)|_0=z_0,\qquad
 (S_j/t)|_0=z_0[1-\cos(\varphi_j^0-\psi_0)]>0\quad(j=c,d).
\end{equation}
All other cyclic triple areas have positive limits. The invertible row and coordinate changes and Lemma~\ref{lem:hessian} give nondegeneracy. Finally, Proposition~\ref{prop:pair-scale} gives $t=\nu^{1/3}$, $z=\delta/t\to z_0$, $u=(\rho_b-\rho_a)/t\to0$, and, after extraction, $p\to p_0$. The actual variables therefore enter the common root ball, without convergence of $\alpha=\mu_a/\nu$.

For a triple with comparable internal distances, label its members $1,2,3$ in increasing angular order and its exterior $p$. Its limiting angle in the gauge $\varphi_p=0$ is $\tau h$, $\tau=\pm1$. Consider
\begin{equation}\label{eq:internal}
 \mathcal G_i(z;\alpha)=D_0z_i-
 \sum_{j\ne i}\alpha_j\frac{z_i-z_j}{|z_i-z_j|^3}=0,
 \qquad z_1<z_2<z_3,\quad \alpha_i\ge0,\quad\sum_i\alpha_i=1.
\end{equation}
For positive weights these are the divided gradient equations of the proper, strictly convex function
\[
 \mathcal V(z)=\frac{D_0}2\sum_i\alpha_i z_i^2+
 \sum_{i<j}\frac{\alpha_i\alpha_j}{|z_i-z_j|},
\]
so there is exactly one root and its weighted center is zero. For nonnegative weights the divided derivative $J=D_z\mathcal G$ has entries
\begin{equation}\label{eq:internal-J}
 J_{ii}=D_0+2\sum_{j\ne i}\frac{\alpha_j}{|z_i-z_j|^3},\qquad
 J_{ij}=-\frac{2\alpha_j}{|z_i-z_j|^3}\quad(i\ne j).
\end{equation}
Strict row diagonal dominance gives $\norm{Jv}_\infty\ge D_0\norm v_\infty$. The integral of $J$ along any ordered segment has the same property, proving uniqueness also for nonnegative weights. Boundary existence is needed only at limits obtained below.

Set
\begin{equation}\label{eq:LB}
 L=\begin{pmatrix}-1&1&0\\0&-1&1\end{pmatrix},\qquad
 B_\alpha=\begin{pmatrix}-(\alpha_2+\alpha_3)&-\alpha_3\\
 \alpha_1&-\alpha_3\\\alpha_1&\alpha_1+\alpha_2\end{pmatrix}.
\end{equation}
Then $LB_\alpha=I_2$, $\alpha^TB_\alpha=0$. Fix a distinct root $z^0$ at $\alpha^0$, put $v^0=Lz^0$, and use
\begin{equation}\label{eq:regular-coordinates}
 \begin{gathered}
 \mu_i=t^3\alpha_i,\quad\mu_p=1-t^3,\qquad
 w=B_\alpha u,\quad z=B_\alpha v,\\
 \rho_p=r,\quad\varphi_p=0,\quad
 \rho_i=R+tw_i,\quad\varphi_i=\psi+tz_i.
 \end{gathered}
\end{equation}
For $P=(P_1,P_2,P_3)^T$, either $R$ or $T$, retain $A_P=\alpha^TP$ and $B_P=LP/t$, together with $R_p$. The inverse $P=A_P\one+tB_\alpha B_P$ retains every probe row, including when $\alpha_i=0$; $T_p$ follows since $\mu_p>0$. Equation~\eqref{eq:regular-int} proves smoothness on an independent $(\eps,t,\alpha)$ neighborhood: all $W_{ij}$ are positive at $z^0$.

At $\eps=t=0$ the radial rows are $g(r),g(R),g'(R)u_1,g'(R)u_2$. Put
\begin{equation}\label{eq:Hzero}
 \mathcal H(\rho,v;r)=-\rho r\sin v
 \bigl((\rho^2+r^2-2\rho r\cos v)^{-3/2}-1\bigr).
\end{equation}
The angular rows are $\mathcal H(R,\psi;r)$ and $LM$, where
\[
 M_i=\mathcal H_\rho w_i+\mathcal H_vz_i-
 \sum_{j\ne i}\frac{\alpha_jR^2(z_i-z_j)}{[(w_i-w_j)^2+R^2(z_i-z_j)^2]^{3/2}}.
\]
At $(r,R,u,\psi)=(1,1,0,\tau h)$ the last rows are $L\mathcal G$. Since $\alpha^T\mathcal G=D_0\alpha^Tz=0$, they recover all of \eqref{eq:internal}. At $y^0=(1,1,0,0,\tau h,v_1^0,v_2^0)$ the radial block is $(3I_4,0)$ and the angular block has diagonal blocks $D_0$ and
\begin{equation}\label{eq:Kreg}
 K=LJB_\alpha.
\end{equation}
If $K\xi=0$, then $JB_\alpha\xi=c_1\one$. The identity $\alpha^TJ=D_0\alpha^T$ gives $c_1=0$, hence $\xi=0$. Thus Lemma~\ref{lem:product} applies. At $\eps=0$ row recovery gives $r=R=1,u=0$, including $t=0$ by continuity, so $|r-1|+|R-1|+|u|=O(\eps)$. Nondegeneracy follows from Lemma~\ref{lem:hessian}; convexity is proved in Section~\ref{sec:convexity}.

For capture, Proposition~\ref{prop:triple-scale} gives $t=\nu^{1/3}\asymp d$, $u\to0$ and bounded positive gaps $v$. Extract $\alpha\to\alpha^0$, $v\to v^0$ with both gaps positive. The smooth difference rows limit to $L\mathcal G=0$, hence \eqref{eq:internal}. The resulting boundary root supplies the chart, and local uniqueness captures the sequence.

For a triple containing a smaller pair, let $a<b$ denote that pair, $c$ the third cluster member and $p$ the exterior. Let $\tau=\pm1$ specify the cluster side relative to $p$, and $\omega=\pm1$ the side of $c$ relative to the pair. Put $\beta=1-\alpha$, $\gamma=\eta^3$, and
\begin{equation}\label{eq:nest-masses}
 \mu_a=t^3\gamma\alpha,\quad\mu_b=t^3\gamma\beta,\quad
 \mu_c=t^3(1-\gamma),\quad\mu_p=1-t^3.
\end{equation}
Use $y=(\rho_p,R,u_1,u_2,\psi,z_1,z_2)$ and
\begin{equation}\label{eq:nest-coordinates}
 \begin{aligned}
 S&=R-(1-\gamma)tu_1,&\psi_0&=\psi-(1-\gamma)tz_1,\\
 \rho_a&=S-\beta t\eta u_2,&\varphi_a&=\psi_0-\beta t\eta z_2,\\
 \rho_b&=S+\alpha t\eta u_2,&\varphi_b&=\psi_0+\alpha t\eta z_2,\\
 \rho_c&=R+\gamma tu_1,&\varphi_c&=\psi+\gamma tz_1,\qquad\varphi_p=0.
 \end{aligned}
\end{equation}
For either row family $P=R,T$, define
\begin{equation}\label{eq:nest-rows}
 P_0=\alpha P_a+\beta P_b,\quad
 A_P=\gamma P_0+(1-\gamma)P_c,\quad
 B_P=(P_c-P_0)/t,\quad C_P=(P_b-P_a)/(t\eta).
\end{equation}
Retain $(R_p,A_R,B_R,C_R,A_T,B_T,C_T)$. The inverse is
\begin{equation}\label{eq:nest-recovery}
 \begin{aligned}
 P_0&=A_P-(1-\gamma)tB_P,&P_c&=A_P+\gamma tB_P,\\
 P_a&=P_0-\beta t\eta C_P,&P_b&=P_0+\alpha t\eta C_P.
 \end{aligned}
\end{equation}
The coordinate determinant has absolute value $t^4\eta^2$, and $(1-t^3)T_p+t^3A_T=0$ recovers the exterior row. No row is lost at $\alpha=0,1$.

To verify joint smoothness in the independent parameters, put $P_a=u_1+\beta\eta u_2$, $P_b=u_1-\alpha\eta u_2$, $Q_a=z_1+\beta\eta z_2$, $Q_b=z_1-\alpha\eta z_2$. Then $r_{ab}=t\eta W$, $r_{ic}=tV_i$ for $i=a,b$, where
\begin{equation}\label{eq:nest-distances}
 W^2=u_2^2+\rho_a\rho_bz_2^2s(t\eta z_2/2)^2,\qquad
 V_i^2=P_i^2+\rho_i\rho_cQ_i^2s(tQ_i/2)^2.
\end{equation}
Let $H,K$ be the exterior angular and radial fields,
\[
 \begin{aligned}
 H(\rho,v)&=-(1-t^3)\rho\rho_p\sin v
 (|\rho e(v)-\rho_pe(0)|^{-3}-c_\eps),\\
 K(\rho,v)&=g(\rho)-\eps(1-t^3)
 \frac{\rho-\rho_p\cos v}{|\rho e(v)-\rho_pe(0)|^3}-\eps c_\eps X\cdot e(v).
 \end{aligned}
\]
For target values write $H_i=H(\rho_i,\varphi_i)$, and similarly for $K$; index $0$ denotes the average with weights $\alpha,\beta$. Define
\[
 L_i=\rho_i\rho_cQ_i s(tQ_i)(V_i^{-3}-t^3c_\eps),\qquad
 M=\rho_a\rho_bz_2s(t\eta z_2)(W^{-3}-t^3\eta^3c_\eps).
\]
Exact substitution gives
\begin{equation}\label{eq:nest-angular}
 \begin{aligned}
 A_T&=\gamma H_0+(1-\gamma)H_c,\\
 B_T&=(H_c-H_0)/t-L_0,\\
 C_T&=(H_b-H_a)/(t\eta)+(1-\gamma)(L_b-L_a)/\eta-M.
 \end{aligned}
\end{equation}
The field differences use \eqref{eq:hadamard}. For the remaining quotient define
\[
 \mathcal L(r,P,Q)=r\rho_cQ s(tQ)
 [(P^2+r\rho_cQ^2s(tQ/2)^2)^{-3/2}-t^3c_\eps].
\]
Along $(r,P,Q)=(S+t\eta\lambda u_2,u_1-\eta\lambda u_2,z_1-\eta\lambda z_2)$,
\begin{equation}\label{eq:nest-kernel-difference}
 \frac{L_b-L_a}{\eta}
 =\int_{-\beta}^{\alpha}
 (tu_2\mathcal L_r-u_2\mathcal L_P-z_2\mathcal L_Q)(r,P,Q)\dd\lambda.
\end{equation}
The substitution $\lambda+\beta$ fixes the interval to $[0,1]$.

For the radial terms put
\[
 \begin{aligned}
 N_i&=(P_i-\rho_c\chi(t,Q_i))V_i^{-3},&
 O_i&=(P_i+\rho_i\chi(t,Q_i))V_i^{-3},\\
 N&=(u_2-\rho_b\chi(t\eta,z_2))W^{-3},&
 O&=(u_2+\rho_a\chi(t\eta,z_2))W^{-3},
 \end{aligned}
\]
and $J=-\eps t\eta\alpha\beta(\rho_a+\rho_b)\chi(t\eta,z_2)W^{-3}$. The exact rows are
\begin{equation}\label{eq:nest-radial}
 \begin{aligned}
 A_R&=\gamma K_0+(1-\gamma)K_c
       +\eps t\gamma(1-\gamma)(N_0-O_0)+\gamma J,\\
 B_R&=(K_c-K_0)/t-\eps[\gamma O_0+(1-\gamma)N_0]-J/t,\\
 C_R&=(K_b-K_a)/(t\eta)+\eps(1-\gamma)(N_b-N_a)/\eta
       -\eps(\alpha O+\beta N).
 \end{aligned}
\end{equation}
Here $J/t$ is explicitly smooth. The quotient $(N_b-N_a)/\eta$ has formula \eqref{eq:nest-kernel-difference} with
\[
 \mathcal N(r,P,Q)=(P-\rho_c\chi(t,Q))
 (P^2+r\rho_cQ^2s(tQ/2)^2)^{-3/2}.
\]
The exterior row has no internal denominator. Positive $W,V_a,V_b$ therefore give joint smoothness in $(\eps,t,\eta,\alpha)$, including all scale faces and both zero-mass endpoints.

At $\eps=t=\eta=0$ the radial rows are $g(\rho_p),g(R),g'(R)u_1,g'(R)u_2$. Put
\[
 L_0=\frac{R^2z_1}{(u_1^2+R^2z_1^2)^{3/2}},\qquad
 M_0=\frac{R^2z_2}{(u_2^2+R^2z_2^2)^{3/2}}.
\]
With $\mathcal H$ from \eqref{eq:Hzero}, the angular rows are
\begin{equation}\label{eq:nest-boundary}
 \begin{aligned}
 A_T&=\mathcal H(R,\psi;\rho_p),\\
 B_T&=\mathcal H_\rho u_1+\mathcal H_vz_1-L_0,\\
 C_T&=\mathcal H_\rho u_2+\mathcal H_vz_2-u_2\partial_{u_1}L_0-z_2\partial_{z_1}L_0-M_0.
 \end{aligned}
\end{equation}
They vanish at
\begin{equation}\label{eq:nest-base}
 y^0=(1,1,0,0,\tau h,Z_0,z_0),\qquad
 Z_0=\omega D_0^{-1/3},\quad z_0=(3D_0)^{-1/3}.
\end{equation}
The radial block is $(3I_4,0)$ and the angular block is
\begin{equation}\label{eq:nest-angle-J}
 \begin{pmatrix}
 D_0&0&0\\
 CZ_0&3D_0&0\\
 Cz_0&-6D_0z_0/Z_0&9D_0
 \end{pmatrix},\qquad C=-\tfrac{27}{4}\sin(\tau h).
\end{equation}
Indeed $\partial_{z_1}L_0=-2D_0$, $\partial_{z_1}^2L_0=6D_0/Z_0$ and $\partial_{z_2}M_0=-6D_0$ at the base. Hence $\det D_yF=2187D_0^3\ne0$. The base, derivative and positive denominator bounds are independent of $\alpha$. Lemma~\ref{lem:product} supplies one root and a bounded inverse on an independent $(\eps,t,\eta)$ product, uniformly for $\alpha\in[0,1]$. At $\eps=0$ row recovery gives unit radii and $u_1=u_2=0$; continuity covers the scale faces. Thus $u_j=O(\eps)$ uniformly. At positive parameters Lemma~\ref{lem:hessian} gives nondegeneracy.

For capture, take $t=\nu^{1/3}$, $\eta=(\xi/\nu)^{1/3}$, $\alpha=\mu_a/\xi$. Proposition~\ref{prop:nested-scale} gives
\[
 t,\eta\to0,\quad d/t\to D_0^{-1/3},\quad
 z_2=\delta/(t\eta)\to z_0,\quad
 u_2=(\rho_b-\rho_a)/(t\eta)=O(\eps)\to0.
\]
The larger-scale radial estimate gives $u_1\to0$. The two signed angular differences from the pair to $c$, divided by $d$, tend to $\omega$, so averaging gives $z_1\to Z_0$. Also $\rho_p,R\to1$ and $\psi\to\tau h$. The variables enter the common root ball without convergence of $\alpha$. Table~\ref{tab:nested} lists the four possibilities.

\begin{table}[htbp]
\centering
\caption{Nested triple charts in the standard directed order.}\label{tab:nested}
\begin{tabular}{cccc}
\hline
$E$&$\{a,b\}$&$p$&$(\tau,\omega)$\\\hline
$\{1,2,3\}$&$\{1,2\}$&$4$&$(-1,+1)$\\
$\{1,2,3\}$&$\{2,3\}$&$4$&$(-1,-1)$\\
$\{2,3,4\}$&$\{2,3\}$&$1$&$(+1,+1)$\\
$\{2,3,4\}$&$\{3,4\}$&$1$&$(+1,-1)$\\\hline
\end{tabular}
\end{table}

Extracting fixed labels selects one chart. Section~\ref{sec:convexity} proves strict convexity and identifies its fixed-mass endpoint at $\eps=0$.

\section{Strict convexity and fixed-mass endpoints}
\label{sec:convexity}
The triple charts require exact collinearity at $t=0$ to preserve convexity on an independent parameter product.

Set $a=(1+\eps)^{1/3}$, $e_0=e(\tau h)$, $w_0=e(0)-e_0$ and $q^0=ae_0$.
At $t=0$, the exterior is $q_p=ae(0)$ and the probe field is
\[
 F(q)=q-\frac q{|q|^3}
      +\eps\frac{q_p-q}{|q_p-q|^3}-\eps\frac{q_p}{a^3}.
\]
The equilateral background gives $F(q^0)=0$. In the basis $(e_0,e_0^\perp)$ its derivative is
\begin{equation}\label{eq:bg-matrix}
 B=\frac3{1+\eps}(e_0e_0^T+\eps w_0w_0^T)
  =\frac3{1+\eps}
   \begin{pmatrix}1+\eps/4&\tau\sqrt3\eps/4\\
                   \tau\sqrt3\eps/4&3\eps/4\end{pmatrix}.
\end{equation}
Its trace is $3$ and its smaller eigenvalue is
\begin{equation}\label{eq:slow}
 \lambda=\eps d,\qquad
 d=\frac9{2(1+\eps)^2\bigl(1+\sqrt{1-3\eps/(1+\eps)^2}\bigr)},
 \qquad d(0)=D_0.
\end{equation}
All quantities below depend smoothly on small $\eps$:
\[
 k=-\frac{B_{12}}{B_{11}-\lambda},\qquad
 n=\frac{ke_0+e_0^\perp}{\sqrt{1+k^2}},\qquad
 l=(D_0/d)^{1/3},\qquad b=\frac{l}{a\sqrt{1+k^2}}.
\]
Here $Bn=\lambda n$, $k(0)=0$ and $l(0)=b(0)=1$.
If distinct scalar positions $s_i$ satisfy
\begin{equation}\label{eq:scalar-slow}
 D_0s_i-\sum_{j\ne i}\omega_j\frac{s_i-s_j}{|s_i-s_j|^3}=0,
 \qquad \sum_i\omega_i s_i=0,\quad\sum_i\omega_i=1,
\end{equation}
then $b_i=ls_i n$ satisfy
\begin{equation}\label{eq:vector-slow}
 Bb_i-\eps\sum_{j\ne i}\omega_j\frac{b_i-b_j}{|b_i-b_j|^3}=0,
\end{equation}
since $dl=D_0l^{-2}$.

For fixed $\eps\ne0$ and, in the nested chart, $\eta>0$, substitute
$q_i(t)=q^0+tb_i+O(t^2)$ and $\mu_i=t^3\omega_i$ in the Cartesian equations, keeping $q_p=ae(0)$ fixed.
Their first-order terms are $t$ times the left side of \eqref{eq:vector-slow}; the exterior mass and reaction vector change only at order $t^3$.
On these paths $Y-c_\eps X=O(t^3)$, and the exact identities
\[
 R_i=[F_i+\eps(Y-c_\eps X)]\cdot e(\varphi_i),\qquad
 T_i=\frac{\rho_i}{\eps}[F_i+\eps(Y-c_\eps X)]\cdot e^\perp(\varphi_i)
\]
identify the boundary difference rows with the radial projections and $a/\eps$ times the tangential projections. Basis derivatives contribute nothing because $F(q^0)=0$; the center and exterior rows vanish on the background.
Thus \eqref{eq:vector-slow} solves these boundary equations. Their joint smooth extensions from Section~\ref{sec:charts} also give the identities at zero parameters.

For the regular chart take $s_i=z_i(0,0,\alpha)$ from its own root.
Weighted row recovery gives \eqref{eq:scalar-slow}. The resulting boundary root is
\begin{equation}\label{eq:regular-slow}
 r=R=a,\qquad \psi=\tau h,\qquad
 v=bv(0,0,\alpha),\qquad u=akv.
\end{equation}
The factor $a^{-1}$ in $b$ converts tangential to angular displacement.
This candidate stays in the chart's common neighborhood, so uniqueness identifies it with the root at $t=0$. In particular,
\begin{equation}\label{eq:regular-parallel}
 w_i=akz_i\qquad(t=0).
\end{equation}

For the nested chart, its root at $\eps=t=0$, with $\eta>0$ fixed, gives
\[
 s_a=-(1-\gamma)z_1-\beta\eta z_2,\quad
 s_b=-(1-\gamma)z_1+\alpha\eta z_2,\quad s_c=\gamma z_1,
\]
with weights $(\gamma\alpha,\gamma\beta,1-\gamma)$.
The angular difference rows make the three scalar residuals equal, and their weighted sum is zero. Hence \eqref{eq:scalar-slow} holds, including $\alpha=0,1$.
Set $\rho_p=R=a$, $\psi=\tau h$, multiply both $z_j$ by $b$, and set $u_j=akz_j$.
This boundary root is smooth in $\eta,\alpha$ and stays in the common neighborhood.
Uniqueness and continuity as $\eta\downarrow0$ yield
\begin{equation}\label{eq:nest-parallel}
 u_2z_1-u_1z_2=0\qquad(t=0)
\end{equation}
for every small $\eps$ and all $\alpha\in[0,1]$.

We use these collinearity identities to prove strict convexity.
For $A<B<C<A+\pi$,
\begin{equation}\label{eq:circle-area}
 \det(e(B)-e(A),e(C)-e(A))
 =4\sin\frac{B-A}2\sin\frac{C-B}2\sin\frac{C-A}2>0.
\end{equation}
With the origin as third vertex, the corresponding area is $\sin(B-A)>0$.

In the regular chart, line-segment integration gives smooth divided edges
\[
 q_j-q_i=tV_{ij},\qquad
 V_{ij}|_{t=0}=(w_j-w_i)e_0+R(z_j-z_i)e_0^\perp.
\]
Evaluate $B_1=\det(V_{12},V_{13})$ along the smooth chart root.
Equation~\eqref{eq:regular-parallel} gives $B_1(\eps,0,\alpha)=0$, so
\begin{equation}\label{eq:regular-area}
 \det(q_2-q_1,q_3-q_1)=t^3C_1,\qquad
 C_1(\eps,t,\alpha)=\int_0^1\partial_tB_1(\eps,st,\alpha)\dd s.
\end{equation}
The derivative includes the root's dependence on $t$.
At $\eps=0$ every radius is one; writing $v^0=v(0,0,\alpha)$, \eqref{eq:circle-area} gives
\[
 C_1(0,0,\alpha)=\tfrac12v^0_1v^0_2(v^0_1+v^0_2)>0.
\]
Thus $C_1$ stays positive on a smaller product.
Areas containing both exterior vertices have positive equilateral limits.
Those containing two cluster members and one exterior vertex have a factor $t$; at the base their quotients are $z_j-z_i$ for the origin and $(z_j-z_i)(1-\cos h)$ for $p$, with cyclic order understood. They are positive.

In the nested chart write
\[
 q_b-q_a=t\eta V_{ab},\qquad q_c-q_a=tV_{ac},\qquad
 B_2=\det(V_{ab},V_{ac})
\]
along its root. At $t=0$,
\[
 B_2=R\{u_2(z_1+\beta\eta z_2)-z_2(u_1+\beta\eta u_2)\}
    =R(u_2z_1-u_1z_2)=0.
\]
Hence
\begin{equation}\label{eq:nest-area}
 \det(q_b-q_a,q_c-q_a)=t^3\eta C_2,\qquad
 C_2=\int_0^1\partial_tB_2(\eps,st,\eta,\alpha)\dd s.
\end{equation}
Again the derivative is along the root. Equation~\eqref{eq:circle-area} gives
\[
 C_2(0,0,0,\alpha)=\tfrac12z_0Z_0^2>0.
\]
This is independent of $\alpha$ and positive for either sign of $Z_0$; $(c,a,b)$ and $(a,b,c)$ are cyclic permutations.
Table~\ref{tab:areas} gives the other area quotients at $\eps=t=\eta=0$, in cyclic order.

\begin{table}[htbp]
\centering
\caption{Remaining signed-area quotients in the nested chart.}\label{tab:areas}
\begin{tabular}{ccc}
\hline
 Vertices&Factor&Limiting quotient\\\hline
 $0,p$, one cluster member&$1$&$\sin h$\\
 $0,a,b$&$t\eta$&$z_0$\\
 $p,a,b$&$t\eta$&$z_0(1-\cos h)$\\
 $0,c$, one pair member&$t$&$|Z_0|$\\
 $p,c$, one pair member&$t$&$|Z_0|(1-\cos h)$\\\hline
\end{tabular}
\end{table}

Continuity preserves all ten cyclic triple areas on a common product, uniformly for $\alpha\in[0,1]$.
Thus every consecutive edge has the other vertices strictly on its left, proving strict convexity in both triple charts.

\begin{proposition}\label{prop:chart-interface}
After restriction to smaller independent parameter products, all four chart types capture their actual sequences. Their roots with positive satellite masses and $\eps>0$ are strictly convex in the prescribed order and have nondegenerate shape Hessian.
Fixing any positive mass vector represented by a chart and varying only $\eps$ gives a smooth branch on $[0,\bar\eps)$. At zero it has unit radii and distinct coorbital angles in the same open semicircle, for that same mass vector. For a captured sequence, $\bar\eps>0$ is independent of its index after extraction into one chart.
\end{proposition}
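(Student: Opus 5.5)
The proof assembles the four chart constructions of Section~\ref{sec:charts} with the convexity estimates of Section~\ref{sec:convexity}. I would treat the four types separately: the distinct-limit chart, the pair chart, the regular triple chart and the nested triple chart. For each type the limiting data range over a compact set, namely the closed simplex for the distinct chart, $p_0$ and $\psi_0$ for pairs, $\alpha^0\in$ closed simplex with root $z^0$ for regular triples, and $\alpha\in[0,1]$ together with the four sign patterns of Table~\ref{tab:nested} for nested triples. The first step is to shrink each independent product given by Lemma~\ref{lem:product}. The shrunken product must lie inside the region where the area quotients of \eqref{eq:pair-areas}, \eqref{eq:regular-area}, \eqref{eq:nest-area} and Table~\ref{tab:areas} remain positive, and inside the region where the inverse bound holds. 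Capture is unaffected by this shrinking. The capture arguments only showed that the scaled variables of an actual sequence converge to the base point, using Propositions~\ref{prop:pair-scale}, \ref{prop:triple-scale} and~\ref{prop:nested-scale}, and convergence still places them in any smaller ball after finitely many indices. Local uniqueness then identifies each such sequence term with the chart root.

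At a chart root with positive masses and $\eps>0$, the positive cyclic areas give strict convexity in the prescribed order. The chart coordinates are recovered by invertible row and coordinate changes, whose determinants are $t$, $t^4\eta^2$, and so on. These are nonzero at positive scales, so Lemma~\ref{lem:hessian} transfers invertibility of $D_yF$ to nondegeneracy of the shape Hessian.

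For the fixed-mass branch, fix a positive $\mu$ represented in a chart. This fixes $t=\nu^{1/3}$, and in the other charts also $\eta$, $\alpha$ and $p$. The chart's independent product contains the segment $\eps\in[0,\bar\eps)$ with these parameters held fixed, and the root is smooth in $\eps$ on it. At $\eps=0$, row recovery gives unit radii, and the retained angular rows reduce to the divided equations $E_i=0$ of \eqref{eq:co-equations} for this same $\mu$. The angles are distinct because $t>0$ and, in the nested chart, $\eta>0$, and they lie in an open semicircle by the span bound built into the base points, for instance $\psi=\tau h$ with small offsets.

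The main obstacle is showing that $\bar\eps$ is independent of the sequence index. The parameters $(t,\eta,\alpha,p)$ of a captured sequence move toward the scale faces, and a bound of the form $\bar\eps$ depending on $t$ would fail. The key point is that Lemma~\ref{lem:product} gives an $\eps$-interval that is common to all $(t,\eta)$ in the product, including the boundary faces, and uniform in $\alpha\in[0,1]$. The area positivity was likewise arranged on the full product, through the exact factorizations $t^3C_1$ and $t^3\eta C_2$. So once a captured sequence has been extracted into one chart, its parameters eventually lie in the product, and the common $\bar\eps$ of that product serves for every index.
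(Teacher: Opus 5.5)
Your proposal is correct and follows essentially the same route as the paper. You shrink each chart's product once about its selected base (capture survives because the scaled variables converge to that base), take convexity from the factored area quotients and nondegeneracy from Lemma~\ref{lem:hessian} at positive scales, and freeze the positive mass coordinates so that the common $\eps$-interval of the independent product gives the fixed-mass branch, its coorbital endpoint, and an index-independent $\bar\eps$.

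Two small points. At $\eps=0$ you should say explicitly that the omitted angular row is recovered from $\sum_i\mu_iT_i=0$. Also, your remark that each type's limiting data form a compact set is not needed, and it is not accurate for distinct or regular-triple roots near the simplex boundary, where gaps can degenerate. The per-base restriction that your last paragraph actually uses is exactly what the paper does.
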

\begin{proof}
Smoothness, capture and nondegeneracy follow from Section~\ref{sec:charts}; convexity follows from the area factors above and the pair and noncollision estimates there.
With positive mass coordinates fixed, the scale parameters $t$ and, when present, $\eta$ are positive. The row transformations remain invertible at $\eps=0$: radial recovery gives $g(\rho_i)=0$, hence $\rho_i=1$, and the weighted angular identity recovers the omitted row.
The pair gap $tz$, regular gaps $tv_i$, and nested gaps $t\eta z_2,tQ_a,tQ_b$ remain nonzero in their prescribed order. Exterior separations and span $<\pi$ persist.
Consequently the endpoint is a noncollision coorbital root for the fixed positive masses.
Each parameter restriction is made once about the selected base; freezing the mass coordinates therefore leaves a common $\eps$ interval for the captured sequence.
\end{proof}

\section{Connection of all solutions and the common threshold}
\label{sec:global}
Let $\mathcal S(\eps,\mu)$ consist of the strictly convex multiplier-one representatives in standard directed order,
\[
 y=(\rho_1,\rho_2,\rho_3,\rho_4,\varphi_2,\varphi_3,\varphi_4),
 \qquad \varphi_1=0.
\]
Each allowed similarity class has exactly one such representative.

\begin{proposition}\label{prop:connection}
If $y_n\in\mathcal S(\eps_n,\mu^{(n)})$ and $\eps_n\to0$, then a subsequence admits smooth branches $\Gamma_n:[0,\bar\eps)\to\R^7$, with $\bar\eps>0$ independent of $n$, such that $\Gamma_n(\eps_n)=y_n$ and
\[
 \Gamma_n(s)\in\mathcal S(s,\mu^{(n)})\qquad(0<s<\bar\eps).
\]
Their shape Hessians are nondegenerate for $s>0$, and their zero endpoints are distinct-angle coorbital roots for $\mu^{(n)}$ in the same order.
\end{proposition}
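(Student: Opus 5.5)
The plan is to reduce Proposition~\ref{prop:connection} to the classification of collision limits in Section~\ref{sec:limits} and the chart interface, Proposition~\ref{prop:chart-interface}. The argument has three steps.

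\textbf{Step 1: place the sequence in one chart.} Given $y_n\in\mathcal S(\eps_n,\mu^{(n)})$ with $\eps_n\to0$, I first extract a subsequence along which the following converge:
\begin{itemize}
\item the masses $\mu^{(n)}$, in the closed simplex;
\item the radii, which tend to one by Proposition~\ref{prop:uniform};
\item the angles, which lie in $[0,\pi-\eta_0]$ by \eqref{eq:span};
\item in the collision cases, the scaled quantities $\nu^{1/3}$, $(\xi/\nu)^{1/3}$, the exterior mass fraction, and the rescaled gaps.
\end{itemize}
After a further extraction, the complete clusters are fixed. The discussion at the end of Section~\ref{sec:limits} then puts the subsequence in exactly one of four types: no collision, a complete pair, a regular triple, or a nested triple. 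Propositions~\ref{prop:pair-scale}, \ref{prop:triple-scale} and \ref{prop:nested-scale} supply the limiting base data (exterior angles $0,h$, the probe derivative $D$, the base points $y^0$, and so on). Those propositions also show that the chart variables of the corresponding chart in Section~\ref{sec:charts} converge to that base point. Convergence of the internal ratios $\alpha$ is not needed.

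\textbf{Step 2: capture and freeze masses.} By the capture statements of Proposition~\ref{prop:chart-interface}, for large $n$ the point $y_n$, written in chart variables, is the unique root of that chart in its common root ball at the parameters $(\eps_n,t_n,\ldots)$ determined by $\mu^{(n)}$. I then freeze the mass coordinates of $\mu^{(n)}$, namely $t_n,p_n,\alpha_n$ and, where present, $\eta_n$. All of them are positive, so the chart root defines a smooth curve in $\eps$. This curve is defined on the common interval $[0,\bar\eps)$ that Proposition~\ref{prop:chart-interface} provides, independently of $n$. I define $\Gamma_n(s)$ by converting the chart root at parameter $s$ back to $y$-coordinates with $\varphi_1=0$. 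The conversion is smooth, since the coordinate changes are affine in the variables, with determinant $t$ or $t^4\eta^2$, which is nonzero at positive masses. A rotation to the gauge $\varphi_1=0$ is also smooth. Uniqueness of the root in the ball gives $\Gamma_n(\eps_n)=y_n$; this requires $\eps_n<\bar\eps$, which holds for large $n$.

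\textbf{Step 3: membership and endpoint.} For $0<s<\bar\eps$, Proposition~\ref{prop:chart-interface} gives strict convexity in the prescribed order, together with nondegenerate shape Hessians via Lemma~\ref{lem:hessian}. The recovered rows show that $\Gamma_n(s)$ solves the full equations \eqref{eq:cc} with multiplier one. Hence $\Gamma_n(s)\in\mathcal S(s,\mu^{(n)})$. At $s=0$, the same proposition gives unit radii and distinct coorbital angles in the open semicircle, all for the fixed mass vector $\mu^{(n)}$.

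\textbf{Main obstacle.} The delicate point is uniformity of $\bar\eps$ in $n$. The chart's independent parameter product must contain the frozen masses $(t_n,\ldots)$ for every large $n$, together with the whole segment $[0,\bar\eps)$ in $\eps$. I would obtain this by making one restriction about the single selected base. The chart supplies a product neighborhood $U_\eps\times U_{\text{mass}}$ that is uniform in $\alpha\in[0,1]$. Convergence of the scale parameters eventually places the frozen masses in $U_{\text{mass}}$. One then takes $\bar\eps$ so that $[0,\bar\eps)\subset U_\eps$.

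A further check is needed because the convexity area factors, for example $t^3C_1$ and $t^3\eta C_2$, vanish at $t=0$. This does not cause trouble, because $t_n>0$ is fixed along each branch. The positivity of $C_1$ and $C_2$ on the product then suffices for every $s$.
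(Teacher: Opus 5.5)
Your proposal is correct and follows the paper's own argument essentially step for step. You classify the extracted sequence into one of the four collision types, capture it in the corresponding chart, freeze the mass coordinates on the common $\eps$ interval of Proposition~\ref{prop:chart-interface}, and restore the gauge $\varphi_1=0$. One small correction: the regular triple chart is local in its weight vector, so there you must also extract $\alpha\to\alpha^0$ (together with $v\to v^0$) to select the base root. Only the pair fractions in the pair and nested charts may fail to converge, because those charts are uniform for $\alpha\in[0,1]$.
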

\begin{proof}
Proposition~\ref{prop:uniform} gives unit-radius limits and bounds the span away from both zero and $\pi$.
Extract limits of the angles and masses. Equal-angle blocks are consecutive; a four-member block is impossible, and Proposition~\ref{prop:pair-scale} excludes two pairs.
For a triple, extract the shortest-to-largest distance ratio. A positive limit gives the regular chart, after extracting a distinct scalar root; a zero limit gives the nested chart, after fixing the shortest pair.
Together with noncollision and one-pair limits, these exhaust the possibilities.
The charts capture each case, and Proposition~\ref{prop:chart-interface} gives branches with their mass coordinates fixed on one common $\eps$ interval. The pair fractions need not converge because the corresponding charts are uniform on $[0,1]$.
Subtracting the current $\varphi_1$ from every angle restores the required gauge smoothly, preserves the equations and nondegeneracy, and returns $y_n$ at $\eps_n$.
\end{proof}

\begin{proof}[Proof of Theorem~\ref{thm:main}]
For each fixed $\mu\in\M$, Proposition~\ref{prop:local} gives a locally unique convex branch from
$y^0(\mu)=(1,1,1,1,\theta_2^0(\mu),\theta_3^0(\mu),\theta_4^0(\mu))$ on $0\le s<\ell(\mu)$.
Theorem~\ref{thm:coorbital} identifies every endpoint in Proposition~\ref{prop:connection} with this point for its fixed mass vector.

If there were no common existence bound, we could choose $\eps_n\to0$ and $\mu^{(n)}$ with $\mathcal S(\eps_n,\mu^{(n)})$ empty.
Choose $0<s_n<\frac12\min\{\eps_n,1/n,\ell(\mu^{(n)})\}$ and take the local root at $s_n$.
Proposition~\ref{prop:connection} gives branches through these roots on a common interval containing $\eps_n$ for large $n$, a contradiction. Let $\delta_1>0$ be a common existence bound.

If there were no common uniqueness bound, there would be two distinct roots for the same $\mu^{(n)}$ at $\eps_n\to0$.
Apply Proposition~\ref{prop:connection} successively to the two sequences. On a common subsequence their branches $\Gamma_n^1,\Gamma_n^2$ share an interval $[0,\tau)$.
Their endpoints coincide, so fixed-mass local uniqueness makes the branches equal near zero.
For each sufficiently large $n$, the set
\[
 Q_n=\{s\in[0,\tau):\Gamma_n^1(s)=\Gamma_n^2(s)\}
\]
is closed and contains a neighborhood of zero. At every positive point it is open by nondegeneracy, Lemma~\ref{lem:hessian} and the implicit function theorem.
Connectedness gives $Q_n=[0,\tau)$, contradicting distinctness at $\eps_n$. This yields $\delta_2>0$.

A sequence of degenerate roots with $\eps_n\to0$ would be captured by Proposition~\ref{prop:connection}, contradicting nondegeneracy along its positive-parameter branches. Thus there is $\delta_3>0$.

Set $\delta_0=\min\{\delta_1,\delta_2,\delta_3\}$.
Simultaneous relabeling of positions and masses takes each directed order to the standard order and maps $\M$ onto itself, so the same bound applies to every order.
Undoing the multiplier-one normalization proves the theorem.
\end{proof}

\end{document}